\documentclass[11pt,reqno]{amsart}
\usepackage{amssymb}
\usepackage{amsmath}
\usepackage{amsthm}
\usepackage{mathrsfs}
\usepackage{graphicx}
\usepackage{caption}
\usepackage{bm}
\usepackage[colorlinks=true,urlcolor=blue,
citecolor=red,linkcolor=blue,linktocpage,pdfpagelabels,
bookmarksnumbered,bookmarksopen]{hyperref}
\usepackage[titletoc,title]{appendix}

\usepackage[normalem]{ulem}

\usepackage{mathtools}	

\numberwithin{equation}{section} 
\newcounter{cont}[section] 

\newtheorem{thm}[cont]{Theorem}
\newtheorem{prop}[cont]{Proposition}
\newtheorem{lem}[cont]{Lemma}
\newtheorem{cor}[cont]{Corollary}
\theoremstyle{definition}

 \theoremstyle{remark}
 \newtheorem{rem}[cont]{Remark}

\newcommand{\R}{\mathbb{R}}
\newcommand{\e}{\varepsilon}

\DeclareMathOperator{\sgn}{sgn}	

\begin{document}
\baselineskip=16pt

\title[Time-fractional Allen--Cahn equations]{Slow motion for the time-fractional Allen--Cahn equation with linear and nonlinear diffusion}

\author[D. Cusseddu]{Davide Cusseddu}
\address[Davide Cusseddu]{Dipartimento di Scienze Matematiche ``G. L. Lagrange'' Politecnico di Torino, Corso Duca degli Abruzzi 24, 10129 Torino (Italy)}
\email{davide.cusseddu@gmail.com}

\author[R. Folino]{Raffaele Folino}
\address[Raffaele Folino]{Departamento de Matem\'aticas y Mec\'anica, Instituto de Investigaciones en Matem\'aticas Aplicadas y en Sistemas, Universidad Nacional Aut\'onoma de M\'exico, Circuito Escolar s/n, C.P. 04510 Cd. de M\'exico (M\'exico)}
\email{folino@aries.iimas.unam.mx}
 
\author[L. F. L\'{o}pez R\'{\i}os]{Luis F. L\'{o}pez R\'{\i}os}
\address[L. F. L\'{o}pez R\'{\i}os]{Departamento de Matem\'aticas y Mec\'anica\\Instituto de Investigaciones en Matem\'aticas Aplicadas y 
en Sistemas\\Universidad Nacional Aut\'{o}noma de M\'{e}xico\\Circuito Escolar s/n, Ciudad Universitaria, C.P. 04510\\Cd. de M\'{e}xico (Mexico)}
\email{luis.lopez@mym.iimas.unam.mx}

\keywords{Time-fractional Allen--Cahn equation; Caputo fractional derivative; slow motion; energy estimates}

\maketitle

\begin{abstract}
We study the slow motion of phase transition layers for a time-fractional Allen--Cahn equation with a Caputo time derivative of order $\alpha\in(0,1)$. 
We show that the fractional memory preserves the distinction between exponentially and algebraically slow dynamics 
associated with non-degenerate and degenerate potentials, respectively. 
In the non-degenerate case, layered solutions persist on an exponentially long time scale of order $\exp(A/(\alpha\e))$, $A>0$,
while in the degenerate case the corresponding algebraic time scale is modified by the factor $1/\alpha$. 
Finally, we extend the analysis to nonlinear diffusion operators, considering both the $p$-Laplacian and a Perona--Malik type diffusion, 
showing that the same fractional slow-motion mechanism persists in these more general settings.
\end{abstract}

\section{Introduction}\label{sec:intro}
The interplay between local interactions and spatial diffusion gives rise to a rich variety of spatio-temporal dynamics, making reaction-diffusion equations a fundamental class of nonlinear evolution equations. A classical example is the Allen--Cahn equation \cite{Allen-Cahn}, which combines a diffusion operator with a nonlinear reaction term and has been extensively studied from both analytical and variational perspectives. In its classical formulation, the evolution is local in time and governed by the first-order time derivative. A different temporal structure arises when this derivative is replaced by a fractional one, which is nonlocal in time, as its value at a given time depends on the behavior of the solution over the entire preceding time interval. 
In this paper, we consider a time-fractional version of the classical Allen--Cahn equation, and investigate how time-fractional nonlocality affects the slow dynamics of phase transition layers with respect to the classical cases.

\subsection{Time-fractional Allen--Cahn equation}
We first investigate the persistence of phase transition layer solutions to the time-fractional Allen--Cahn equation
\begin{equation}\label{eq:C-model}
	\prescript{C}{}\partial_t^\alpha u=\e^2u_{xx}-F'(u),
\end{equation}
where $u=u(x,t) : [a,b]\times(0,+\infty)\rightarrow \R$, complemented with homogeneous Neumann boundary conditions
\begin{equation}\label{eq:Neu}
	u_x(a,t)=u_x(b,t)=0, \qquad \qquad t>0,
\end{equation}
and initial datum
\begin{equation}\label{eq:initial}
	u(x,0)=u_0(x), \qquad \qquad x\in[a,b].
\end{equation}
In \eqref{eq:C-model} ${}^C\partial_t^\alpha u$ denotes the Caputo fractional derivative of order $\alpha\in(0,1)$ of the function $u$ with respect to the time variable $t$, 
$\e>0$ is a small parameter, while $F:\R\to\R$ is a double well potential with wells of equal depth.
More precisely, the Caputo fractional time derivative of order $\alpha\in(0,1)$ is here defined by
\begin{equation*}\label{eq:Caputo}
	\prescript{C}{}\partial_t^\alpha u=\prescript{C}{0}D_t^\alpha u(x,t):=\frac{1}{\Gamma(1-\alpha)}\int_0^t \frac{u_t(x,s)}{(t-s)^\alpha}\,ds, \qquad t>0,
\end{equation*}
or, in a more compact form, by
\begin{equation*}
	\prescript{C}{}\partial_t^\alpha u=\prescript{}{0}I_t^{1-\alpha} u_t, \qquad t>0,
\end{equation*}
where 
\begin{equation*}\label{eq:RL-fi}
	\prescript{}{0}I_t^{\alpha} v:=\frac{1}{\Gamma(\alpha)}\int_0^t \frac{v(s)}{(t-s)^{1-\alpha}}\,ds,\qquad t>0,
\end{equation*}
is the Riemann-Liouville fractional integral of order $\alpha$ of a function $v$ and $\Gamma$ is the standard gamma function, see e.g. \cite{Diethelm,Kilbas}. 
On the right-hand side, for the reaction term we require that the potential $F\in C^1(\R)$ satisfies
\begin{equation}\label{ipoF1}
	F(\pm1)=F'(\pm1)=0, \qquad \quad F(u)>0 \quad \forall\,u\neq\pm 1,
\end{equation}
and that there exist constants $0<\lambda_1\leq\lambda_2$, $\eta>0$ and $\theta>1$ such that:
\begin{equation}\label{ipoF2}
	\lambda_1 |1\pm u|^{\theta-2} \leq \frac{F'(u)}{u\pm1}\leq\lambda_2 |1\pm u|^{\theta-2}, \qquad\qquad \mbox{ for } \qquad |u\pm1|<\eta.
\end{equation}
Therefore, \eqref{ipoF1} ensures that $F$ is a double well potential with wells of equal depth in $u=\pm1$ and \eqref{ipoF2}
describes the behavior of $F$ close to the minimal points.
In particular, notice that by integrating \eqref{ipoF2} and using \eqref{ipoF1}, we obtain
\begin{equation}\label{eq:ass-F3}
		\frac{\lambda_1}{\theta}|1\pm u|^{\theta}\leq F(u)\leq \frac{\lambda_2}{\theta}|1\pm u|^{\theta}, 
		\quad \qquad |u\pm1|<\eta.
\end{equation}
The simplest example of potential $F$ satisfying \eqref{ipoF1}-\eqref{ipoF2} is given by
\begin{equation}\label{F:ex}
	F(u)=\frac{1}{2\theta}|1-u^2|^\theta, \quad \theta>1.
\end{equation}
The corresponding profiles for different values of $\theta>1$ are illustrated in Figure \ref{fig:F}.

\begin{figure}[t]
	\begin{center}
		\includegraphics[width=8cm,height=6cm]{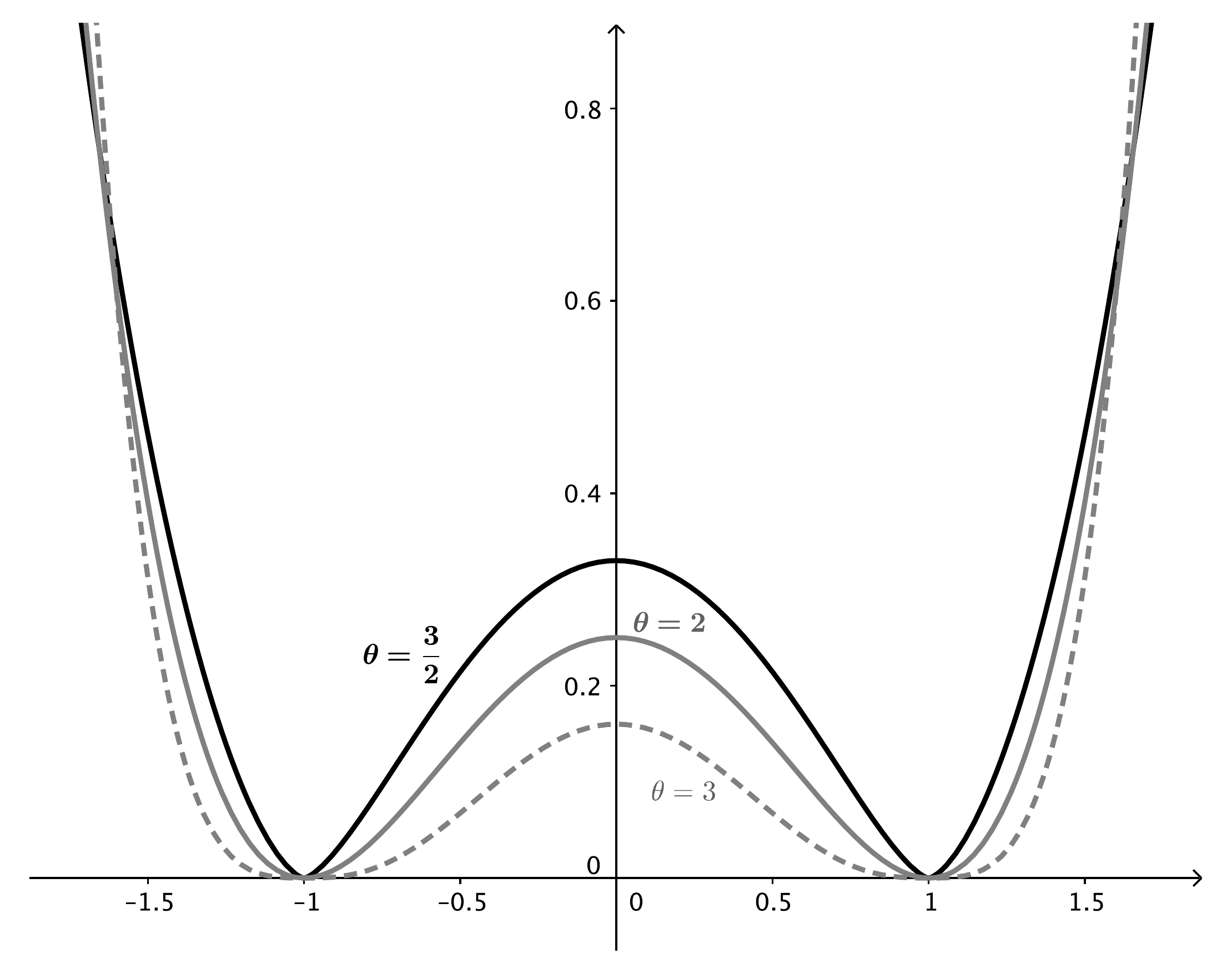}
		\hspace{3mm}
		\caption{\small{The potential $F$ defined in \eqref{F:ex} for different values of $\theta$.}}
		\label{fig:F}
	\end{center}
\end{figure}

It is worth mentioning that when $\theta=2$ in \eqref{F:ex}, we obtain the classical double well potential $F(u)=\frac{1}{4}(1-u^2)^2$, 
which, in particular, satisfies $F''(\pm1)>0$; 
on the other hand, if $\theta\in(1,2)$ the second derivative of the potential \eqref{F:ex} blows up as $u\to\pm1$, 
while for $\theta>2$ we have the \emph{degenerate} case $F''(\pm1)=0$. 
Finally, notice that \eqref{ipoF1} implies that the reaction term $f=-F'$ satisfies $\displaystyle\int_{-1}^1 f(s)\,ds=0$, 
being the reason why we call $f$ a balanced bistable reaction term.

The interplay between a balanced reaction term satisfying $F''(\pm1)>0$ and the linear diffusion has been extensively studied 
since the seminal works \cite{Bron-Kohn,Carr-Pego,Fusco-Hale},
where the authors consider the classical one dimensional Allen--Cahn equation \cite{Allen-Cahn}, 
formally obtained from \eqref{eq:C-model} by setting $\alpha=1$:
\begin{equation}\label{eq:A-C}
	u_t=\e^2u_{xx}-F'(u).
\end{equation}
For small $\e>0$, solutions with transition layer initial data exhibit \emph{metastable dynamics}, namely the interfaces move with exponentially small velocity until their annihilation, 
after which the remaining layers undergo the same slow dynamics. 
Eventually, the solution converges to one of the stable equilibria $u=\pm1$.
A rigorous analysis of this phenomenon was initiated in \cite{Carr-Pego}, where the annihilation time of the closest layers was shown to be of order $\exp(Al/\e)$, 
with $A=\min{F''(\pm1)}>0$ and $l$ the distance between the interfaces. 
The condition $F''(\pm1)>0$ is essential for the metastable dynamics, whereas in the degenerate case $F''(\pm1)=0$
the exponentially slow motion is replaced by an algebraic one \cite{Bet-Sme}.

For potentials of the form \eqref{F:ex} with $\theta\in(1,2)$, the behaviour differs significantly, 
since stationary solutions of \eqref{eq:A-C} with boundary conditions \eqref{eq:Neu} may exhibit an arbitrary number of transition layers located inside $(a,b)$ \cite{Dra-Rob,FPS-DCDS}. More generally, \cite{FPS-DCDS} shows that for the $p$-Laplacian extension, obtained by replacing $u_{xx}$ with $(|u_x|^{p-2}u_x)_x$, the dynamics is determined by the relation between $p$ and $\theta$: layered stationary states exist for $\theta<p$, metastability occurs for $\theta=p$, and algebraic slow motion takes place for $\theta>p$.

The main novelty of our work consists in extending the aforementioned results
to the setting of fractional time derivatives and in emphasizing the role played by the fractional order $\alpha\in(0,1)$.
Since the stationary solutions of \eqref{eq:C-model} and \eqref{eq:A-C} coincide, 
we restrict our analysis to the case where $F$ satisfies \eqref{ipoF1}-\eqref{ipoF2} with $\theta\geq2$.
Before presenting our results, we briefly recall some recent works concerning the time-fractional Allen--Cahn equation.

In applications, among the various definitions of fractional derivatives, the Caputo derivative is perhaps the most widely used. It is generally preferred to the Riemann--Liouville derivative, as it can be viewed as a regularisation of the latter. In particular, in the vast majority of the literature, the time-fractional Allen--Cahn equation is treated with the Caputo derivative.
	
Liu et al. \cite{Liu} suggest that time-fractional Allen--Cahn equation may be used to model diffusion in highly porous media, 
where the introduction of the Caputo derivative is advantageous for modeling sub-diffusive processes. 
It is worth remarking, however, that from a physical perspective, a clear derivation of the fractional model from the underlying physical processes remains necessary. 
Indeed, replacing the classical time derivative, i.e., the rate of variation, with a fractional derivative does not, by itself, necessarily translate into a new or more general physical model. Related concerns are discussed in \cite{Cusseddu} and, in the context of the time-fractional Fisher--KPP equation, in \cite{AngstmannHenry}.
Nevertheless, the time-fractional Allen--Cahn equation has already been studied by many authors, particularly from the numerical point of view. 
On the analytical side, it was shown by Tang et al. \cite{TYZ} that an energy dissipation law exists also in the time-fractional case, but of integral type, see also \cite{Quan23, Fritz2026}.
Du et al. \cite{DYZ2020} later complemented this result, by proving well-posedness of the time-fractional Allen--Cahn problem with homogeneous Dirichlet boundary conditions and derived an additional fractional energy dissipation law. 
The time-fractional Allen--Cahn equation was also studied by Dipierro et al. \cite{DNV24}, 
who formally showed that the level sets of solutions evolve according to a power of the mean curvature depending on the fractional order.

However, to the best of our knowledge, there has been no rigorous analysis of the slow dynamics 
of phase transition layer solutions to reaction-diffusion models with a time derivative of Caputo type.
In this direction, we stress once again that the stationary solutions are not affected by the introduction of the fractional time derivative;
indeed, since the Caputo derivative vanishes for time-independent functions, the stationary solutions of \eqref{eq:C-model} coincide with those of \eqref{eq:A-C}. 
Thus, the effect of the fractional time derivative does not change the set of stationary states, but rather affects the dynamics of evolving solutions.

For the non-degenerate case $\theta=2$ in \eqref{ipoF2}, Theorem \ref{thm:main} establishes the persistence of the metastable behavior known for 
the classical Allen--Cahn equation \eqref{eq:A-C} in the fractional framework \eqref{eq:C-model}. 
More precisely, solutions starting from a suitable layered initial datum remain close to the corresponding initial profile for an exponentially long time,
that is, a time of order $\exp\left(\frac{A}{\alpha\e}\right)$, where $A$ is a positive constant. 
A remarkable feature of this result is that the fractional order $\alpha$ appears in the denominator of the exponential time scale. 
Hence, the memory effect induced by the fractional derivative further slows down a dynamics which is already exponentially slow in the classical case.

For degenerate potentials, namely $\theta>2$ in \eqref{ipoF2}, the behavior is substantially different. 
Theorem \ref{thm:main2} shows that the exponentially slow motion is replaced by an algebraically slow dynamics, 
and that layered solutions persist for an algebraically long time, with time scale $\e^{-k_j/\alpha}$, for any $j\in\mathbb N$, where $k_j$ is defined in \eqref{eq:exp_alg}.  
Also in this case, the fractional order $\alpha$ enters the time scale through the factor $1/\alpha$, 
showing that the fractional memory further delays the evolution of the phase transition layers.

Finally, Theorem \ref{thm:interface} gives a direct description of the slow motion of the transition layers, 
showing that their displacement remains small on the same exponentially and algebraically long time scales described above.

These results show that the fractional time derivative preserves the qualitative distinction between non-degenerate and degenerate potentials, 
while further slowing down the corresponding dynamics. 
We next extend these results to more general nonlinear diffusion operators.
As we will see, in the nonlinear diffusion setting, a further source of degeneracy may arise from the diffusion operator itself. 

\subsection{Nonlinear diffusion} 
In the last part of this work, we consider two nonlinear diffusion models, obtained by replacing the linear diffusion term $\e^2u_{xx}$ either with 
the $p$-Laplacian operator $\e^p(|u_x|^{p-2}u_x)_x$, $p>1$, or with a nonlinear diffusion of Perona--Malik type, namely $Q(\e^2u_x)_x$, introduced in \cite{Per-Mal}. 
The two models read as
\begin{equation}\label{eq:p-lap}
	\prescript{C}{}\partial_t^\alpha u=\e^p(|u_x|^{p-2}u_x)_x-F'(u), \qquad p>1,
\end{equation}
and
\begin{equation}\label{eq:Q-dif}
	\prescript{C}{}\partial_t^\alpha u=Q(\e^2u_x)_x-F'(u).
\end{equation}
The $p$-Laplacian model provides a natural extension of the classical Allen--Cahn equation and allows the effect of the nonlinear (possibly degenerate) diffusion 
to be separated from that of the fractional time derivative. 
In the classical case, the slow dynamics of layered solutions depends on the relation between the exponent $p$ of the diffusion 
and the exponent $\theta$ describing the behavior of the potential $F$ near its minima. 
In particular, the critical case $\theta=p$ corresponds to exponentially slow motion, whereas for $\theta>p$ the slow motion is algebraic \cite{FPS-DCDS}.
Notice that the model \eqref{eq:C-model} is recovered for $p=2$ in \eqref{eq:p-lap}.

The Perona--Malik model \eqref{eq:Q-dif} presents a different type of nonlinear diffusion. 
We assume that the function $Q\in C^1(\R)$ satisfies
\begin{equation}\label{eq:Q-ass1}
	\lim_{s\to\pm\infty}Q(s)=Q(0)= 0, \qquad \qquad Q(-s)=-Q(s),
\end{equation}
for all $s\in\R$ and that there exists $\kappa>0$ such that
\begin{equation}\label{eq:Q-ass2}
	Q'(s)>0, \quad \forall \, s \in (-\kappa,\kappa) \qquad \mbox{ and } \qquad Q'(s)<0, \quad \mbox{ if } |s|>\kappa.
\end{equation}
The prototype examples are
\begin{equation}\label{ex:fluxfunction}
	Q(s):=\frac{s}{1+s^2} \qquad \mbox{ and } \qquad Q(s):=s e^{-s^2},
\end{equation}
which satisfy assumptions \eqref{eq:Q-ass2} with $\kappa = 1$ and $\kappa=1/\sqrt{2}$, respectively (see Figure \ref{fig:Q}).
\begin{figure}[ht]
	\begin{center}
		\includegraphics[width=8cm,height=6cm]{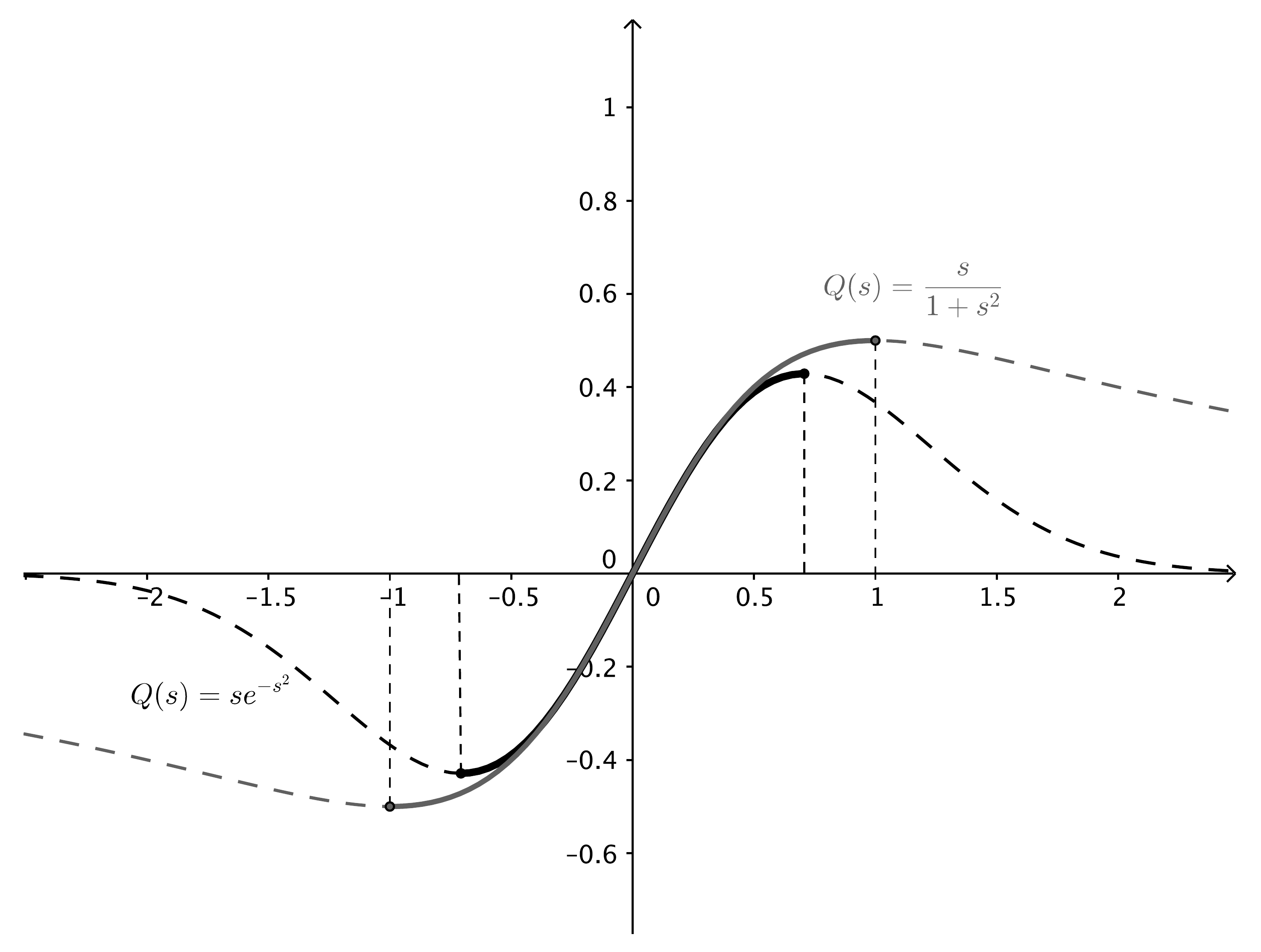}
		\hspace{3mm}
		\caption{\small{The two prototype examples defined in \eqref{ex:fluxfunction}.}}
		\label{fig:Q}
	\end{center}
\end{figure}

The $p$-Laplacian also allows for degenerate diffusion when $p>2$, since the diffusion coefficient $|u_x|^{p-2}$ vanishes at $u_x=0$. 
The Perona--Malik model exhibits a different type of nonlinearity, characterized by a sign-changing diffusion coefficient. 
Indeed, writing
$$
	Q(\e^2u_x)_x=\e^2Q'(\e^2u_x)u_{xx},
$$
the coefficient $Q'(\e^2u_x)$ is positive in the parabolic region and becomes negative when $|\e^2u_x|$ crosses a threshold. 
This sign-changing structure is the characteristic feature of the Perona--Malik model, originally introduced in \cite{Per-Mal}. 
The existence and slow dynamics of layered stationary solutions for this type of diffusion in the classical setting have been studied in \cite{DLFS}.

Before presenting the results for the models \eqref{eq:p-lap}-\eqref{eq:Q-dif}, 
we explain why we chose to begin the paper with the results for the model with linear diffusion \eqref{eq:C-model}, 
although this case is included in the $p$-Laplacian framework by taking $p=2$. 
The main reason is that it allows us to introduce the ideas underlying the extension to the fractional setting in the simplest and most classical case, 
without adding at the same time the technical difficulties related to the nonlinear diffusion. 
In particular, our aim is first to show how the results for the classical Allen--Cahn equation \eqref{eq:A-C} can be transferred to the case of a Caputo time derivative \eqref{eq:C-model}. 
Once this idea is understood in the simpler setting, the extension to more general diffusion operators follows by 
combining the same arguments with the results already available for the corresponding problems with the classical time derivative (formally obtained by taking $\alpha=1$), 
and contained in \cite{FPS-DCDS} for the $p$-Laplacian and in \cite{DLFS} for Perona--Malik type diffusions.
The results obtained in Section \ref{sec:nonlinear} show that the effect of the fractional time derivative is robust with respect to the choice of the diffusion operator. 
Indeed, the distinction between exponentially and algebraically slow motion induced by the potential is preserved, 
while the fractional memory produces the same additional slowing effect through the factor $1/\alpha$ in the corresponding time scales.

\subsection{Plan of the paper} 
The rest of the paper is organized as follows. 
In Section \ref{sec:pre}, we introduce the energy associated with the time-fractional Allen--Cahn equation \eqref{eq:C-model} 
and establish the energy dissipation estimate induced by the Caputo derivative. 
We also recall the variational results concerning the energy of layered functions. 
In Section \ref{sec:slow}, we use these estimates to study the slow motion of phase transition layers for  \eqref{eq:C-model}, 
considering both the non-degenerate potential case $\theta=2$ and the degenerate one $\theta>2$,
and then we describe the corresponding motion of the interfaces. 
Section \ref{sec:nonlinear} is devoted to nonlinear diffusion:
we first consider the time-fractional Allen--Cahn equation with $p$-Laplacian \eqref{eq:p-lap} and then the one with Perona--Malik type diffusion \eqref{eq:Q-dif}, 
showing that the same fractional slow-motion mechanism applies to these more general diffusion operators.

\section{Energy for the time-fractional Allen--Cahn equation}\label{sec:pre}
An energy approach used to investigate the slow motion of the solutions to the Allen--Cahn equation \eqref{eq:A-C} was firstly proposed by Bronsard and Kohn in \cite{Bron-Kohn} 
and it was later refined by Grant \cite{Grant} to study Cahn--Morral systems. 
More recently, it was applied in the analysis of different models of Allen--Cahn type, see e.g. \cite{DLFS,FPS-DCDS} and references therein.

Due to the presence of the time-fractional derivative $\prescript{C}{}\partial_t^\alpha u$ instead of $u_t$, the slow motion analysis of the solutions to \eqref{eq:C-model}-\eqref{eq:Neu}-\eqref{eq:initial} needs to be adapted, by incorporating results from fractional calculus. 

\subsection{Energy dissipation}
The energy associated to \eqref{eq:C-model}-\eqref{eq:Neu}-\eqref{eq:initial} is defined by
\begin{equation}\label{eq:energy}
	E_\e[u]:=\int_a^b \left[\frac{\e u_x^2}{2}+\frac{F(u)}\e\right]\, dx.
\end{equation}
The main result of this section presents a crucial inequality for \eqref{eq:energy}, 
which relates the variation of the energy $E_\e[u]$ evaluated along solutions of \eqref{eq:C-model}-\eqref{eq:Neu}-\eqref{eq:initial}
to the $L^2$-norm of the Caputo derivative of order $(1+\alpha)/2$.
To establish the energy dissipation we use the $L^2$-theory of the Fourier transform. For $u \in L^2(\R) \cap L^1(\R)$ this operator is defined by
\begin{equation*}
	\hat u (\omega) = \int_{-\infty}^\infty u(t) e^{-i \omega t} \, dt,
\end{equation*}
where $i=\sqrt{-1}$. For a general $u \in L^2(\R)$, the Fourier transform is defined by density, through the geometric structure of $L^2(\R)$. This is a complex Hilbert space with the inner product $(u,v) := \int_{-\infty}^\infty u \overline{v}$, with the bar denoting the complex conjugate. It is well known that the Fourier transform is a \emph{conformal transformation} of the $L^2(\R)$ space, namely
\begin{equation}\label{eq:conformal}
	(\hat u, \hat v) = (u,v), \quad \text{for all } u, v \in L^2(\R).
\end{equation}
Regarding fractional integrals, for $v \in L^2(\R) \cap L^1(\R)$ it is defined
\begin{equation*}
	\prescript{}{-\infty}{I}_t^\beta v (t)=\frac{1}{\Gamma(\beta)}\int_{-\infty}^t \frac{v(s)}{(t-s)^{1-\beta}}\,ds,\qquad \prescript{}{t}{I}_\infty^\beta v (t)=\frac{1}{\Gamma(\beta)}\int_t^\infty \frac{v(s)}{(s-t)^{1-\beta}}\,ds.
\end{equation*}
For $\beta \in (0,1/2)$, their Fourier transforms are
\begin{equation} \label{eq:frac_integrals}
    	(\prescript{}{-\infty}{I}_t^\beta v)^\wedge(\omega) = \frac{\hat v}{(i \omega)^\beta}, \qquad\qquad
	(\prescript{}{t}{I}_\infty^\beta v)^\wedge(\omega) = \frac{\hat v}{(-i \omega)^\beta},
\end{equation}
generalizing their corresponding classical formulae for repeated integration. 
The restrictions on $v$ and $\beta$ keep all the previous expressions in the $L^2(\R)$ space. Recall that
\begin{equation} \label{eq:complex_power}
	(\pm i \omega)^\beta = |\omega|^\beta e^{\pm \sgn(\omega) \beta \frac{\pi}{2}i},
\end{equation}
where $\sgn$ is the sign function. For more details, we refer the reader to \cite[Section~7.1]{SaKiMa93}.

\begin{prop}\label{lem:energy}
Let $u\in C([0,T],H^2(a,b))\cap C^1([0,T],H^1(a,b))$ be a solution of \eqref{eq:C-model}-\eqref{eq:Neu}.
Let $E_\e$ be the functional defined in \eqref{eq:energy}. Then,
\begin{equation}\label{eq:energyestimate}
	E_\e[u](0)-E_\e[u](T)\geq \e^{-1}\sin\left(\frac{\alpha\pi}{2}\right)\int_0^T\!\!\int_a^b\left[\prescript{C}{}\partial_t^{(1+\alpha)/2}u(x,t)\right]^2\,dxdt\geq0,
\end{equation}
for any $\alpha\in(0,1)$.
\end{prop}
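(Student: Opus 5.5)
Multiplying \eqref{eq:C-model} by $u_t$, integrating over $(a,b)$ and using \eqref{eq:Neu}, the right-hand side becomes $-\frac{d}{dt}\int_a^b[\frac{\e^2}{2}u_x^2+F(u)]\,dx=-\e\frac{d}{dt}E_\e[u](t)$. Integrating in time over $(0,T)$ then gives
\begin{equation*}
	\e\bigl(E_\e[u](0)-E_\e[u](T)\bigr)=\int_a^b\int_0^T u_t(x,t)\,\prescript{}{0}I_t^{1-\alpha}u_t(x,t)\,dt\,dx .
\end{equation*}
This uses the regularity $u\in C([0,T],H^2)\cap C^1([0,T],H^1)$, which justifies the integration by parts $\int \e^2u_{xx}u_t=-\frac{d}{dt}\int\frac{\e^2}{2}u_x^2$ and the chain rule for $F(u)$. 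So \eqref{eq:energyestimate} reduces to a coercivity estimate, for each fixed $x$, of the form
\begin{equation*}
	\int_0^T v\,\prescript{}{0}I_t^{1-\alpha}v\,dt\ \ge\ \sin\Bigl(\frac{\alpha\pi}{2}\Bigr)\int_0^T \bigl[\prescript{}{0}I_t^{(1-\alpha)/2}v\bigr]^2\,dt,\qquad v=u_t(x,\cdot),
\end{equation*}
because $\prescript{C}{}\partial_t^{(1+\alpha)/2}u=\prescript{}{0}I_t^{(1-\alpha)/2}u_t$. Dividing by $\e$ and integrating in $x$ yields the claim, and nonnegativity is then obvious.

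To prove the one-dimensional inequality, extend $v$ by zero outside $[0,T]$; call the extension $\tilde v\in L^2(\R)\cap L^1(\R)$. On $[0,T]$ we have $\prescript{}{0}I_t^{\beta}v=\prescript{}{-\infty}I_t^\beta\tilde v$. Hence
\begin{equation*}
	\int_0^T v\,\prescript{}{0}I^{1-\alpha}_tv\,dt=\int_\R \tilde v\,\prescript{}{-\infty}I_t^{1-\alpha}\tilde v\,dt,
\end{equation*}
since $\tilde v=0$ off $[0,T]$. Set $\beta=(1-\alpha)/2\in(0,1/2)$, so that $1-\alpha=2\beta$. I would apply \eqref{eq:conformal} and \eqref{eq:frac_integrals} to get
\begin{equation*}
	\int_\R \tilde v\,\prescript{}{-\infty}I_t^{2\beta}\tilde v\,dt=\frac{1}{2\pi}\int_\R |\hat v|^2\,|\omega|^{-2\beta}e^{\sgn(\omega)2\beta\frac{\pi}{2}i}\,d\omega .
\end{equation*}
The constant $1/2\pi$ depends on the normalisation of the Fourier transform and cancels on both sides. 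The left side is real, so it equals the real part of the right side. Since $\cos(\beta\pi)=\cos(\frac{\pi}{2}-\frac{\alpha\pi}{2})=\sin(\frac{\alpha\pi}{2})$, this gives
\begin{equation*}
	\int_\R \tilde v\,\prescript{}{-\infty}I_t^{2\beta}\tilde v\,dt=\sin\Bigl(\frac{\alpha\pi}{2}\Bigr)\frac{1}{2\pi}\int_\R|\omega|^{-2\beta}|\hat v|^2\,d\omega .
\end{equation*}
By \eqref{eq:frac_integrals} and \eqref{eq:complex_power}, the last integral is exactly $\|\prescript{}{-\infty}I_t^\beta\tilde v\|_{L^2(\R)}^2$. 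This is at least $\int_0^T[\prescript{}{0}I_t^\beta v]^2dt$, because on $[0,T]$ the function $\prescript{}{-\infty}I^\beta_t\tilde v$ coincides with $\prescript{}{0}I_t^\beta v$, and the contribution of $t>T$ is nonnegative.

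The main obstacle is justifying the Fourier identities, in particular the requirement that $\prescript{}{-\infty}I^{2\beta}_t\tilde v$ and $\prescript{}{-\infty}I^\beta_t\tilde v$ lie in $L^2(\R)$. For $2\beta=1-\alpha$ this can fail if $1-\alpha\ge1/2$, since the fractional integral of a compactly supported function decays only like $t^{2\beta-1}$, which is not in $L^2$ at infinity. I would first avoid this by never needing $\prescript{}{-\infty}I^{2\beta}_t\tilde v\in L^2$. Instead, I would write the pairing as $(\tilde v,\prescript{}{-\infty}I^{2\beta}_t\tilde v)=(\prescript{}{t}I^\beta_\infty\tilde v,\prescript{}{-\infty}I^\beta_t\tilde v)$, which is a Fubini/semigroup identity valid for $v\in L^1\cap L^2$. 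Both factors are then in $L^2$ since $\beta<1/2$: they decay like $t^{\beta-1}$, and the Hardy--Littlewood--Sobolev inequality controls them locally. Finally I would apply Parseval to this pair, using $\overline{(-i\omega)^{-\beta}}\,(i\omega)^{-\beta}$, whose product gives the phase $e^{\sgn(\omega)\beta\pi i}|\omega|^{-2\beta}$, and then take the real part as above. The continuity of $u_t$ in $t$ with values in $H^1$ guarantees $v\in L^1\cap L^2(0,T)$ for a.e. $x$, and Fubini in $x$ is harmless.
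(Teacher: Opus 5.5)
Your proposal is correct and is essentially the paper's argument. Your final version uses the splitting $(\tilde v,{}_{-\infty}I^{2\beta}_t\tilde v)=({}_{t}I^{\beta}_\infty\tilde v,{}_{-\infty}I^{\beta}_t\tilde v)$; after that it matches the paper's steps: energy identity, semigroup property plus fractional integration by parts, zero extension, Parseval with the phase giving $\cos(\tfrac{1-\alpha}{2}\pi)=\sin(\tfrac{\alpha\pi}{2})$, and restricting $\|{}_{-\infty}I_t^{(1-\alpha)/2}\tilde v\|_{L^2(\R)}^2$ to $(0,T)$. Your observation that pairing directly with ${}_{-\infty}I_t^{1-\alpha}\tilde v$ can leave $L^2(\R)$ for $\alpha\le 1/2$ correctly explains why that splitting, which the paper uses from the outset, is needed.
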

\begin{proof}
By multiplying
\eqref{eq:C-model}
 by $u_t$ and integrating over the spatial domain $(a,b)$, we obtain 
 $$
\int_{a}^{b} 
\prescript{C}{}\partial_t^{\alpha}u(x,t) u_t(x,t) \, dx = \int_{a}^{b} \left[\e^2u_{xx}(x,t)-F'(u(x,t))\right]u_t(x,t)\,dx.
$$
By using integration by parts on the right-hand side and the boundary Neumann conditions \eqref{eq:Neu}, we deduce
$$
\int_{a}^{b} \left[\e^2u_{xx}(x,t)-F'(u(x,t))\right]u_t(x,t)\,dx=-\e\frac{d}{dt}E_\e[u](t).
$$
Hence,
$$
\e^{-1} \int_{a}^{b} 
\prescript{C}{}\partial_t^{\alpha}u(x,t) u_t(x,t) \, dx = - \frac{d}{dt}E_\e [u](t),
$$
and integrating with respect to time, we infer
$$
\e^{-1}  \int_{0}^{T}\!\int_{a}^{b}
\prescript{C}{}\partial_t^{\alpha}u(x,t) u_t(x,t) \, dx \, dt = E_\e [u](0) - E_\e [u](T).
$$
We claim that 
\begin{equation}\label{eq:derivatives_lucho}
\int_{0}^{T}\!\int_{a}^{b} 
\prescript{C}{}\partial_t^{\alpha}u(x,t) u_t(x,t) \, dxdt \geq 
\sin\left(\frac{\alpha\pi}{2}\right)\int_0^T\!\int_a^b\left[\prescript{C}{}\partial_t^{(1+\alpha)/2}u(x,t)\right]^2\,dxdt.
\end{equation}
Indeed, notice that, from the definition of Caputo derivative, it follows that
\begin{align*}
 	\int_{0}^{T}\!\int_{a}^{b}\prescript{C}{}\partial_t^{\alpha}u(x,t) u_t(x,t) \, dxdt &= \int_{0}^{T}\!\int_{a}^{b}\prescript{}{0}I_t^{1-\alpha} u_t(x,t) u_t(x,t) \, dxdt\\
	&=\int_{a}^{b}\left[\int_0^T \prescript{}{0}I_t^{(1-\alpha)/2} \left(\prescript{}{0}I_t^{(1-\alpha)/2}u_t(x,t)\right)u_t(x,t)\,dt\right]\,dx\\
	&=\int_{a}^{b}\left[\int_0^T\prescript{}{0}I_t^{(1-\alpha)/2} u_t(x,t)\,\prescript{}{t}I_T^{(1-\alpha)/2} u_t(x,t) \,dt\right]\,dx,
\end{align*}
where the second equality comes from the semigroup property of fractional integrals and the last one from the fractional integration by parts, 
see respectively Property 2.6 and 2.10 in \cite{Kilbas}. 
To compute the integral in time, we follow \cite[Theorem 3.6]{BC-LR} and use the Fourier transform (see also \cite[Corollary 2.1]{TYZ}). 
To this end, let us fix $x \in (a,b)$ such that $u_t(x,\cdot) \in L^2(0,T)$ and consider $v(x,\cdot) \in L^2(\R)$ as its extension by \emph{zero} outside of $(0,T)$. 
In this setting the integral is transformed into
\begin{align*}
	\int_\R \prescript{}{-\infty}{I}_t^{(1-\alpha)/2} v(x,t)\,\prescript{}{t}{I}_\infty^{(1-\alpha)/2} v(x,t) \,dt & = \left( \prescript{}{-\infty}{I}_t^{(1-\alpha)/2} v(x,t)\, , \, \prescript{}{t}{I}_\infty^{(1-\alpha)/2} v(x,t) \right) \\
	& = \left( \left( \prescript{}{-\infty}{I}_t^{(1-\alpha)/2} v(x,t) \right)^\wedge \, , \, \left( \prescript{}{t}{I}_\infty^{(1-\alpha)/2} v(x,t) \right)^\wedge \right) \\
	&= \int_\R \frac{|\hat v(x,\omega)|^2}{(i\omega)^{(1-\alpha)/2}\overline{(-i\omega)^{(1-\alpha)/2}}}\, d\omega,
\end{align*}
by \eqref{eq:conformal} and \eqref{eq:frac_integrals}. 
Now observe that \eqref{eq:complex_power} implies 
$$(i\omega)^{(1-\alpha)/2}\overline{(-i\omega)^{(1-\alpha)/2}} = |\omega|^{1-\alpha} e^{\sgn(\omega)\frac{1-\alpha}{2}\pi i},$$ 
and thereby  
\begin{align*}
	\int_\R \frac{|\hat v(x,\omega)|^2}{(i\omega)^{(1-\alpha)/2}\overline{(-i\omega)^{(1-\alpha)/2}}}\, d\omega &= 
	\int_\R \frac{|\hat v(x,\omega)|^2}{|\omega|^{1-\alpha}} e^{-\sgn(\omega)\frac{1-\alpha}{2}\pi i} \, d\omega \\
	&=  \int_{-\infty}^0 \frac{|\hat v(x,\omega)|^2}{|\omega|^{1-\alpha}} e^{\frac{1-\alpha}{2}\pi i} \, d\omega +  
	\int_0^\infty \frac{|\hat v(x,\omega)|^2}{|\omega|^{1-\alpha}} e^{-\frac{1-\alpha}{2}\pi i} \, d\omega \\
	&= \cos\left( \frac{1-\alpha}{2} \pi \right) \int_\R \frac{|\hat v(x,\omega)|^2}{|\omega|^{1-\alpha}} \, d\omega \\
	&=  \sin\left( \frac{\alpha \pi}{2} \right) \left\| \prescript{}{-\infty}{I}_t^{(1-\alpha)/2} v(x,\cdot) \right\|_{L^2(\R)} \\
	&\ge \sin\left(\frac{\alpha\pi}{2}\right) \left\| \prescript{C}{}{\partial}_t^{(1+\alpha)/2}u(x,\cdot)\right\|^2_{L^2(0,T)}.
\end{align*}
Integrating over $(a,b)$, we end up with \eqref{eq:derivatives_lucho} and the proof is complete.
\end{proof}

As we will see in Sections \ref{sec:slow}-\ref{sec:nonlinear}, inequality \eqref{eq:energyestimate} plays a crucial result in the proof of the slow motion phenomena.
Notice that since $\sin(\alpha\pi/2)>0$ for any $\alpha\in(0,1)$, then from \eqref{eq:energyestimate} we deduce that
\begin{equation}\label{remark:E(t)<E(0)}
	E_\e[u](t) \leq E_\e[u](0),
\end{equation}
for any $t>0$, see also \cite{TYZ}.
\subsection{Variational results}
Here, we recall some variational properties of the energy \eqref{eq:energy}. 
The following are well known results concerning \eqref{eq:energy} and equation \eqref{eq:C-model} plays no role in what follows.
First, it is well known that if $\bar{u}$ is a monotone function connecting the two stable states $+1$ and $-1$, then the energy \eqref{eq:energy} satisfies
\begin{equation}\label{eq:c_0}
	E_\e[\bar{u}]\geq\sqrt2\int_a^b|\bar{u}_x|\sqrt{F(\bar{u})}\,dx=\sqrt2\int_{-1}^{+1}\sqrt{F(s)}\,ds=:c_2,
\end{equation}
The positive constant $c_2$ defined in \eqref{eq:c_0} represents the minimum energy to have a {\it single} transition between $-1$ and $+1$ in the following sense:
let us fix, throughout this paper, a {\it piecewise constant function} $v$ 
and a number $N\in\mathbb{N}$ of transitions,
as follows:
\begin{equation}\label{vstruct}
	\begin{aligned}
	v:[a,b]\rightarrow\{-1,+1\}\  \hbox{with $N$ jumps located at $a<h_1<h_2<\cdots<h_N<b$ and}\ r>0\\
	\hbox{such that}\ (h_i-r,h_i+r)\cap(h_j-r,h_j+r)=\emptyset \  \hbox{for}\ i\neq j\ \hbox{and}\   a\leq h_1-r,\ h_N+r\leq b.
	\end{aligned}
\end{equation}
It has been proved that if $\{ u^\e\}_{\e >0}$ is a family of functions sufficiently close to $v$ in $L^1$, then 
$$E_\e[{u^\e}] \geq N c_\e-R_{\theta,\e},$$ 
where the reminder term $R_{\theta,\e}$ goes to zero as $\e \to 0^+$ with a speed rate depending on $\theta$.
If $\theta=2$, we have the following result.
\begin{prop}\label{prop:lower}
Assume that $F\in C^1(\R)$ satisfies \eqref{ipoF1}-\eqref{ipoF2} with $\theta=2$. 
Let $v$ be as in \eqref{vstruct} and $A\in(0,r\sqrt{2\lambda_1})$ with
$\lambda_1>0$ (independent on $\e$) as in \eqref{ipoF2}.
Then, there exist $\e_0,C,\delta>0$ (depending only on $F,v$ and $A$) such that if $u\in H^1(a,b)$ satisfies 
\begin{equation}\label{eq:u-v}
	\|u-v\|_{{}_{L^1}}\leq\delta,
\end{equation}
then for any $\e\in(0,\e_0)$,
\begin{equation}\label{eq:lower}
	E_\varepsilon[u]\geq Nc_2-C\exp(-A/\varepsilon),
\end{equation}
where $E_\e$ and $c_2$ are defined in \eqref{eq:energy} and \eqref{eq:c_0}, respectively.
\end{prop}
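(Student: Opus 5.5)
This is the classical Bronsard--Kohn/Grant lower bound. The plan is to localize near each jump $h_i$, show that $u$ must cross from near $-1$ to near $+1$ inside $(h_i-r,h_i+r)$, and bound from below the energy of each such transition.

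\textbf{Step 1 (localization).} Fix a small $\rho\in(0,\eta)$, chosen below in terms of $A$. Assume $\|u-v\|_{L^1}\le\delta$ with $\delta$ small compared to $r\rho$. Since $u\in H^1\subset C$ and $v$ is constant on each side of $h_i$, we can find points $r_1\in(h_i-r,h_i)$ and $r_2\in(h_i,h_i+r)$ with $|u(r_k)-v(r_k)|<\rho$. The $L^1$ smallness forces $|u-v|<\rho$ on a set of large measure in each half-interval. Moreover, these points can be taken close to $h_i\mp r$, say in $(h_i-r,h_i-r+\sigma)$ and $(h_i+r-\sigma,h_i+r)$ with $\sigma$ small, by choosing $\delta<\sigma\rho$.

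\textbf{Step 2 (energy of one transition).} Suppose for instance $v=-1$ left of $h_i$ and $v=+1$ right of it. By Young's inequality $\frac{\e u_x^2}{2}+\frac{F(u)}{\e}\ge\sqrt2|u_x|\sqrt{F(u)}$, so
\[
\int_{r_1}^{r_2}\Big[\frac{\e u_x^2}{2}+\frac{F(u)}{\e}\Big]dx\ \ge\ \sqrt2\int_{u(r_1)}^{u(r_2)}\sqrt{F(s)}\,ds\ \ge\ c_2-\sqrt2\int_{-1}^{-1+\rho}\sqrt F-\sqrt2\int_{1-\rho}^{1}\sqrt F .
\]
The loss is of order $\rho^2$ by \eqref{eq:ass-F3} with $\theta=2$. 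This is not exponentially small, so it must be recovered on the remaining pieces $(h_i-r,r_1)$ and $(r_2,h_i+r)$.

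\textbf{Step 3 (recovering the deficit; the main obstacle).} On $(h_i-r,r_1)$, I would minimize $\int[\frac{\e}{2}w_x^2+\frac{\lambda_1}{2\e}(w+1)^2]$ among functions with $w(r_1)=u(r_1)$, where $\lambda_1$ comes from \eqref{eq:ass-F3} as long as $u$ stays within $\eta$ of $-1$. The one-dimensional linear problem $\e^2 z''=\lambda_1 z$ with $z=u+1$ has explicit solution, with a free end at $h_i-r$. Its minimal value is
\[
\sqrt{\lambda_1}\,\tfrac{\sqrt{2}}{2}\,z(r_1)^2\tanh\!\big(\sqrt{\lambda_1}(r_1-h_i+r)/\e\big)\ \ge\ \tfrac{\sqrt{\lambda_1}}{\sqrt2}\,z(r_1)^2\big(1-2e^{-2\sqrt{\lambda_1}(r-\sigma)/\e}\big).
\]
The leading term $\frac{\sqrt{\lambda_1}}{\sqrt2}z(r_1)^2$ matches $\sqrt2\int_{-1}^{-1+z(r_1)}\sqrt F$ to leading order when $\lambda_1$ is the sharp constant. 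To make this rigorous, instead of comparing with the minimizer, I would redo Step 2 with $r_1$ replaced by the point near $h_i$ and use the quadratic lower bound only on the tails. Equivalently, I would apply the exact calculation with $F\ge\frac{\lambda_1}{2}(1+u)^2$ on the whole piece and $F\le\frac{\lambda_2}{2}(\cdot)^2$ in the deficit estimate. Then the mismatch between $\lambda_1$ and $\lambda_2$ is handled by taking $\rho$ small and using the exact local behaviour of $F$ near $\pm1$, i.e.\ Grant's refinement. If $u$ leaves the $\eta$-neighbourhood on that tail, the energy there is at least a positive constant independent of $\e$, and the bound is trivial.

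The exponential error is $e^{-2\sqrt{\lambda_1}(r-\sigma)/\e}$ times a constant; its rate $2\sqrt{\lambda_1}(r-\sigma)\ge r\sqrt{2\lambda_1}>A$ for $\sigma$ small. This yields the loss $C\exp(-A/\e)$ per transition.

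\textbf{Step 4 (conclusion).} The intervals $(h_i-r,h_i+r)$ are disjoint and the integrand is nonnegative. Summing the bounds from Steps 2--3 over $i=1,\dots,N$ gives \eqref{eq:lower}, with $\e_0,C,\delta$ depending only on $F$, $v$ and $A$. I expect Step 3 to be the hard part: the exact cancellation of the $O(\rho^2)$ deficit requires a careful choice of the comparison problem and of where the quadratic bounds on $F$ are applied.
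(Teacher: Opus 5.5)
Your Step 3 has a genuine gap, and it is exactly the point you flag. Comparing $u$ on a tail with the minimizer of the quadratic surrogate $\int[\frac{\e}{2}w_x^2+\frac{\lambda_1}{2\e}(w+1)^2]$ gives at best $\frac{\sqrt{\lambda_1}}{2}z(r_1)^2\tanh(\cdot)$; the prefactor is $\frac{\sqrt{\lambda_1}}{2}$, not $\frac{\sqrt{\lambda_1}}{\sqrt2}$. The deficit left over from Step 2 is $\sqrt2\int_{-1}^{u(r_1)}\sqrt{F}$, and \eqref{eq:ass-F3} only bounds it by $\frac{\sqrt{\lambda_2}}{2}z(r_1)^2$. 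Since $F$ is merely $C^1$ and \eqref{ipoF2} has two constants $\lambda_1\le\lambda_2$, there is no ``sharp constant''. The mismatch, up to $\frac{\sqrt{\lambda_2}-\sqrt{\lambda_1}}{2}z(r_1)^2$, is an $\e$-independent quantity that cannot be absorbed into $C\exp(-A/\e)$. Shrinking $\rho$ does not help, because $\rho$ (and hence $\delta$) must be fixed independently of $\e$. Appealing to ``Grant's refinement'' names the fix without supplying it.

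Step 1 is also inconsistent with Step 3. If $r_1\in(h_i-r,h_i-r+\sigma)$, the tail $(h_i-r,r_1)$ has length less than $\sigma$, so $\tanh(\sqrt{\lambda_1}(r_1-h_i+r)/\e)\ge1-2e^{-2\sqrt{\lambda_1}(r-\sigma)/\e}$ is false. The points must be chosen close to $h_i$, so that the tails are long.

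The missing idea is Grant's, used in \cite{FPS-DCDS} and visible in the sketch of Proposition \ref{prop:lower-p}: never replace $F$ by a quadratic inside the energy.

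\begin{itemize}
\item Take a tail $(r_+,h_i+r)$ with $|u(r_+)-1|<\rho$ and $r_+-h_i<\hat r$. Let $z$ minimize the \emph{true} functional $\int_{r_+}^{h_i+r}[\frac{\e}{2}z_x^2+\frac{F(z)}{\e}]\,dx$ subject to $z(r_+)=u(r_+)$, with free right end.
\item The energy of $u$ on the tail dominates that of $z$. Young's inequality applied to $z$ gives at least $\sqrt2\int_{u(r_+)}^{z(h_i+r)}\sqrt F$.
\item So the Step 2 deficit cancels exactly, except for the endpoint error $\sqrt2\int_{z(h_i+r)}^{1}\sqrt F\le C\,(z(h_i+r)-1)^2$.
\end{itemize}

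The bounds on $F$ are used only to make this endpoint error exponentially small. One checks that $z$ stays within $\rho$ of $1$ and solves $\e^2z''=F'(z)$ with $z'(h_i+r)=0$. Then $\psi=(z-1)^2$ satisfies $\psi''\ge 2(z-1)z''\ge\frac{2\lambda_1}{\e^2}\psi$ by \eqref{ipoF2}. Comparison with $\rho^2\cosh(\sqrt{2\lambda_1}(x-h_i-r)/\e)/\cosh(\sqrt{2\lambda_1}(h_i+r-r_+)/\e)$ gives $\psi(h_i+r)\le2\rho^2\exp(-\sqrt{2\lambda_1}(r-\hat r)/\e)$.

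Thus $\lambda_2$ only multiplies an exponentially small term, and no mismatch arises. Choosing $\hat r$ with $\sqrt{2\lambda_1}(r-\hat r)\ge A$ is exactly where the hypothesis $A<r\sqrt{2\lambda_1}$ comes from.
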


\begin{proof}
The lower bound \eqref{eq:lower} was firstly proved by Grant in \cite[Lemma 2.1]{Grant} with a different assumption on the function $u$.
Later, a similar result was proved in \cite[Proposition 3.2]{FPS-DCDS} for the energy
$$\int_a^b\left[\frac{\e^{p-1}|u_x|^p}{p}+\frac{(u^2-1)^p}{2p\e}\right]\,dx, \qquad \qquad \mbox{ for any } p>1,$$
and in \cite[Proposition 3.4]{DLFS} for the energy
$$\int_a^b\left[\frac{\tilde Q(\e^2u_x)}{\e^3}+\frac{F(u)}{\e}\right]\,dx,$$
for a generic function $\tilde Q$ satisfying appropriate assumptions and a potential $F$ satisfying \eqref{ipoF1}-\eqref{ipoF2} with $\theta=2$. 
Since the three latter results are stated in greater generality, their proofs can be readily adapted to the particular setting considered in this paper,
see also Propositions \ref{prop:lower-p}-\ref{prop:lower-Q} for more details.
\end{proof}

In the case $\theta>2$, it is possible to prove a {\it weaker} lower bound for the energy, which is stated in the following proposition.
\begin{prop}\label{prop:lower_deg}
Assume that $F\in C^1(\R)$ satisfies \eqref{ipoF1}-\eqref{ipoF2} with $\theta>2$. 
Let $v$ be as in \eqref{vstruct} and define the sequence
\begin{equation}\label{eq:exp_alg}
	k_j:=\sum_{m=1}^j\gamma^m,
	\qquad\qquad  \mbox{ where }  \quad \gamma:=\displaystyle\frac{1}{2}+\frac{1}{\theta}.
\end{equation}
Then, for any $j\in \mathbb N$ there exist constants $\delta_j>0$ and $C_j>0$ such that if $u\in H^1(a,b)$ satisfies
\begin{equation}\label{|w-v|_L^1<delta_l}
	\|u-v\|_{L^1}\leq\delta_j,
\end{equation}
and 
\begin{equation}\label{E_p(w)<Nc0+eps^l}
	E_\e[u]\leq Nc_2+C\varepsilon^{k_{j}},
\end{equation}
for some $C>0$, with $\varepsilon$ sufficiently small, and $c_2$ defined in \eqref{eq:c_0},then
\begin{equation}\label{E_p(w)>Nc0-eps^l}
	E_\e[u]\geq Nc_2-C_j\varepsilon^{k_{j+1}}.
\end{equation}
\end{prop}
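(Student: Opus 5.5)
The plan is to follow the Bronsard--Kohn/Grant approach as adapted to degenerate potentials in \cite{Bet-Sme,FPS-DCDS}, run as an induction on $j$. The energy is localized near each jump $h_i$ of $v$. Since $\|u-v\|_{L^1}\le\delta_j$ is small, in each interval $(h_i-r,h_i+r)$ there are points $r_i^-<h_i<r_i^+$ with $u(r_i^\pm)$ close to the values $v(h_i\pm r)=\pm1$ (or $\mp1$). On $(r_i^-,r_i^+)$, the inequality $\frac{\e u_x^2}{2}+\frac{F(u)}{\e}\ge\sqrt2|u_x|\sqrt{F(u)}$ gives
$$\int_{r_i^-}^{r_i^+}\Big[\tfrac{\e u_x^2}{2}+\tfrac{F(u)}{\e}\Big]dx\ge c_2-\sqrt2\Big|\int_{u(r_i^+)}^{\pm1}\sqrt F\Big|-\sqrt2\Big|\int_{u(r_i^-)}^{\mp1}\sqrt F\Big|.$$
By \eqref{eq:ass-F3}, each correction is bounded by $C|u(r_i^\pm)\mp1|^{1+\theta/2}$. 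It therefore suffices to find points where $|u\mp1|\le C\e^{k_{j+1}/(1+\theta/2)}$, up to a constant that is summed over the $N$ layers.

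To find such points, I will use the upper bound \eqref{E_p(w)<Nc0+eps^l}. Combining the interior lower bound above (with cruder endpoints) with the upper bound shows that the energy outside the transition cores, on the intervals $(h_i+\rho,h_{i+1}-\rho)$, is at most $C\e^{k_j}$ plus the endpoint errors. On such an interval of fixed length, the bound $\int F(u)/\e\le C\e^{k_j}$ yields a point $x$ with $F(u(x))\le C\e^{1+k_j}$. By \eqref{eq:ass-F3}, and after checking that $u$ stays near the correct well (via the $L^1$ closeness and the one-dimensional $H^1$ bound), this gives $|u(x)\mp1|\le C\e^{(1+k_j)/\theta}$. The correction term is then $O(\e^{(1+k_j)(1+\theta/2)/\theta})=O(\e^{(1+k_j)\gamma})$, because $(1+\theta/2)/\theta=\frac1\theta+\frac12=\gamma$. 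Since $\gamma(1+k_j)=\gamma+\sum_{m=2}^{j+1}\gamma^m=k_{j+1}$, this produces exactly the recursion for the exponents, and the bound $E_\e[u]\ge Nc_2-C_j\e^{k_{j+1}}$ follows by summing over layers. The base case needs no energy upper bound beyond boundedness and gives $E_\e\ge Nc_2-C\e^{\gamma}=Nc_2-C\e^{k_1}$.

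The main obstacle is making the localization argument consistent. The lower bound on each core must use points $r_i^\pm$ that are themselves chosen from the smallness of the outer energy, and that smallness is only known after a lower bound on the cores is available. I would first apply the base case (or the inductive hypothesis at level $j$) to guarantee that each core carries energy at least $c_2-C\e^{k_j}$. Then the outer regions carry at most $C\e^{k_j}$, which is all the argument above requires. The good points must also be placed where $u$ is near the correct well, not near the opposite one. I would secure this using $\|u-v\|_{L^1}\le\delta_j$ together with the fact that a transition costs energy about $c_2$, so that no extra transition fits into the outer budget. The constants $\delta_j$ and $C_j$ will be adjusted step by step along the induction.
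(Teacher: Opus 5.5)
Your proposal is correct and follows the same Bronsard--Kohn/Grant-type iteration that the paper relies on when it cites \cite[Proposition 4.1]{FPS-DCDS} and \cite[Proposition 3.5]{DLFS}: the recursion $k_{j+1}=\gamma(1+k_j)$ comes from an outer energy of order $\e^{k_j}$, which yields a point with $|u\mp1|\le C\e^{(1+k_j)/\theta}$ and hence a Modica--Mortola correction of order $\e^{\gamma(1+k_j)}$. The one thing you should state precisely is that the induction has to carry a localized statement, namely good points in every outer region with the remaining outer intervals of $j$-dependent but $\e$-independent length, because the global conclusion $E_\e[u]\geq Nc_2-C\e^{k_j}$ at the previous level does not by itself bound the individual core energies from below.
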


\begin{proof}
Again, the result is proved in some more general cases (see \cite[Proposition 4.1]{FPS-DCDS} and \cite[Proposition 3.5]{DLFS})
and its proof can be adapted without much extra bookkeeping. 
\end{proof}

Notice that if $\theta>2$ then $\gamma\in(0,1)$, and, consequently the increasing sequence defined in \eqref{eq:exp_alg} is bounded and satisfies
\begin{equation}\label{eq:beta}
	\lim_{j\to+\infty}k_j= \sum_{m=1}^{+\infty} \gamma^m =\frac{1}{1-\gamma}-1=\frac{\theta+2}{\theta-2}:=\beta.
\end{equation}
Therefore, when $\theta>2$ we only have the lower bounds 
\begin{equation*}
	E_\e[u]\geq Nc_\e-C_j\varepsilon^{k_j}, \qquad \mbox{for any } j\in\mathbb N,\qquad \mbox{ with } \qquad \lim_{j\to+\infty}k_j=\beta.
\end{equation*}

The next corollary follows directly from the above Propositions \ref{prop:lower}-\ref{prop:lower_deg}. 
It states that, if the set of functions $\{ u_\e\}_{\e >0}$ makes $N$ transitions between the stable states $-1$ and $+1$ in an {\it energy efficient way}, 
meaning in the sense of \eqref{E_p(w)<Nc0+eps^l}, 
then, when $\e$ tends to 0, 
$E_\e[u^\e]$ converges to the minimum energy in the case of the classical Ginzburg-Landau functional (see \cite{Bron-Kohn}).

\begin{cor}\label{cor:TLS}
Let $v$ be as in \eqref{vstruct} and let $u^\e \in H^1(a,b)$ be such that 
\begin{equation}\label{TLS1}
	\lim_{\varepsilon\rightarrow 0^+} \| u^\e -v \|_{L^1}=0,
\end{equation}
and \eqref{E_p(w)<Nc0+eps^l} holds true. 
Then,
\begin{equation*}
	\lim_{\e \to 0^+} E_\e[u^\e] = N c_2.
\end{equation*}
\end{cor}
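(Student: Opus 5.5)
The plan is a squeeze: an upper bound from the hypothesis \eqref{E_p(w)<Nc0+eps^l} and a lower bound from Propositions \ref{prop:lower}--\ref{prop:lower_deg}, both tending to $Nc_2$. I read the hypothesis as holding along the family, $E_\e[u^\e]\le Nc_2+C\e^{k_j}$ for some fixed $j\in\mathbb N$ and $C>0$. Since $k_j>0$, this gives $\limsup_{\e\to0^+}E_\e[u^\e]\le Nc_2$. In the case $\theta=2$, the natural reading is that $\e^{k_j}$ is replaced by any vanishing quantity, which gives the same conclusion. The remaining task is the matching lower bound $\liminf_{\e\to0^+}E_\e[u^\e]\ge Nc_2$.

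For the lower bound I use \eqref{TLS1}. Given the threshold $\delta$ (for $\theta=2$) or $\delta_j$ (for $\theta>2$), there is $\e_1>0$ such that $\|u^\e-v\|_{L^1}\le\delta$, respectively $\le\delta_j$, for all $\e\in(0,\e_1)$.

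\emph{Case $\theta=2$.} Fix any $A\in(0,r\sqrt{2\lambda_1})$ and apply Proposition \ref{prop:lower}. For $\e<\min\{\e_0,\e_1\}$ it gives $E_\e[u^\e]\ge Nc_2-C\exp(-A/\e)$, and the right-hand side tends to $Nc_2$.

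\emph{Case $\theta>2$.} Both hypotheses of Proposition \ref{prop:lower_deg} hold for small $\e$, namely \eqref{|w-v|_L^1<delta_l} and \eqref{E_p(w)<Nc0+eps^l}. It therefore gives $E_\e[u^\e]\ge Nc_2-C_j\e^{k_{j+1}}$, and this also tends to $Nc_2$ because $k_{j+1}>0$.

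Combining $\liminf\ge Nc_2\ge\limsup$ gives $\lim_{\e\to0^+}E_\e[u^\e]=Nc_2$.

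The only delicate point is the uniformity of constants: the constants $\delta_j$, $C_j$ and the smallness threshold on $\e$ in Proposition \ref{prop:lower_deg} must not depend on $\e$. They depend only on $F$, $v$, $j$ and the constant $C$ in \eqref{E_p(w)<Nc0+eps^l}, so the proposition applies uniformly for all small $\e$, and \eqref{TLS1} supplies the $L^1$ closeness uniformly. Beyond this bookkeeping there is no real obstacle.
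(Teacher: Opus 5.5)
Your squeeze argument is correct and is essentially the paper's proof. The upper bound comes from \eqref{E_p(w)<Nc0+eps^l}, and the lower bound comes from Proposition \ref{prop:lower} ($\theta=2$) or Proposition \ref{prop:lower_deg} ($\theta>2$) once \eqref{TLS1} puts $u^\e$ within the $L^1$ threshold; you then let $\e\to0^+$. The only difference is cosmetic: you keep the sharper remainder $C_j\e^{k_{j+1}}$, where the paper's $R_{\theta,\e}$ records $\e^{k_j}$, and this does not affect the limit.
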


\begin{proof}
Using the results of Propositions \ref{prop:lower}-\ref{prop:lower_deg} and applying \eqref{E_p(w)<Nc0+eps^l} it is easy to see that
\begin{equation}\label{eq:proofTLS}
	Nc_2 -C R_{\theta, \e} \leq E_\e[u^\e] \leq Nc_2 + C\e^{k_j},
\end{equation}
where 
\begin{equation}\label{def:Rte}
R_{\theta, \e}= \left\{ \begin{aligned}
& \exp(-A/\varepsilon) \quad  &\mbox{if}  \quad &\theta=2 \\
& \e^{k_j}  \quad  &\mbox{if}  \quad &\theta>2,
\end{aligned}\right.
\end{equation}
and $ k_j $ is defined in \eqref{eq:exp_alg}. 
Hence, it is now sufficient to pass to the limit as $\e \to 0^+$ in \eqref{eq:proofTLS} to obtain the thesis.
\end{proof}

We conclude this section by recalling how to construct a family of functions satisfying \eqref{E_p(w)<Nc0+eps^l}-\eqref{TLS1}.
To do this, we will use the following standing waves solutions
\begin{equation}\label{eq:Fi}
	\e^2\Phi_\e''-F'(\Phi_\e)=0, \quad \qquad \lim_{x\to\pm\infty}\Phi_\e(x)=\pm1, \qquad \Phi_\e(0)=0.
\end{equation}
Fix $N$ transition points $a<h_1<h_2<\dots<h_n<b$, and denote the middle points by
\begin{equation*}
	m_1:=a, \qquad \quad m_j:=\frac{h_{j-1}+h_j}{2}, \quad j=2,\dots,N, \qquad \quad m_{N+1}:=b.
\end{equation*}
Define
\begin{equation}\label{eq:translayer}
	u^\e(x):=\Phi_\e\left((-1)^j(x-h_j)\right), \qquad \qquad x\in[m_j,m_{j+1}], \qquad \qquad j=1,\dots N,
\end{equation}
where $\Phi_\e$ is as in \eqref{eq:Fi}. 
Notice that $\Psi_\e(x):=\Phi_\e(-x)$ is still a standing wave solution satisfying
$$\e^2\Psi_\e''-F'(\Psi_\e)=0, \quad \qquad \lim_{x\to\pm\infty}\Psi_\e(x)=\mp1, \qquad \Psi_\e(0)=0.$$
In addition, $u^\e(h_j)=0$, for $j=1,\dots,N$ and for definiteness we choose $u^\e(a)>0$ (the case $u^\e(a)<0$ is analogous).
It is easy to check that $u^\e\in H^1(a,b)$ and satisfies \eqref{TLS1};
moreover, from \eqref{eq:energy} and \eqref{eq:translayer} it follows that
\begin{align*}
	E_\e[u^\e]&=\sum_{j=1}^{N}\int_{m_j}^{m_{j+1}}\left[\frac{\e\left[\Phi'_\e((-1)^j(x-h_j))\right]^2}{2}+\frac{F\left(\Phi_\e((-1)^j(x-h_j))\right)}{\e}\right]\,dx\\
			&=\sum_{j=1}^{N}\int_{m_j}^{m_{j+1}}|\Phi'_\e((-1)^j(x-h_j)|\sqrt{2F\left(\Phi_\e((-1)^j(x-h_j))\right)} \, dx,
\end{align*}
in view of \eqref{eq:Fi}. 
Indeed, multiplying by $\Phi_\e'$ the ODE in \eqref{eq:Fi} and using the limiting behavior, one obtains
$$\frac{\e^2}{2}(\Phi'_\e)^2=F(\Phi_\e), \qquad \qquad \mbox{ in } \R.$$
Thus, we end up with 
$$E_\e[u^\e]<\sum_{j=1}^{N}\int_{-1}^{1}\sqrt{2F(s)}\,ds<Nc_2,$$
where $c_2$ is defined in \eqref{eq:c_0}.
In conclusion, we proved that $u^\e$ defined in \eqref{eq:translayer} satisfies \eqref{E_p(w)<Nc0+eps^l}-\eqref{TLS1}.

\section{Slow motion for the time-fractional Allen--Cahn equation}\label{sec:slow}		
In this section, we study the long time dynamics of the solutions to the time-fractional Allen--Cahn problem \eqref{eq:C-model}-\eqref{eq:Neu}-\eqref{eq:initial}, 
with a particular interest to the role played by the Caputo derivative ${}^C\partial_t^\alpha u$.   
In the following subsections, we will go through the different rates of convergence towards an asymptotic configuration, 
which drastically depend on the parameter $\theta$ of the potential $F$, defined in \eqref{ipoF1} and \eqref{ipoF2}. 
Indeed, $\theta$ is the key parameter: when $\theta=2$ the solutions to \eqref{eq:C-model}-\eqref{eq:Neu}-\eqref{eq:initial} exhibit a metastable behavior, 
namely, they remain close to the profile of the initial layered datum for an exponentially long time, dependent on the parameter $\e$ and $\alpha$.
On the other hand, when $\theta>2$ the time is only algebraically long, still dependent on $\e$ and $\alpha$.
These two cases are respectively treated in the following Theorems \ref{thm:main} and  \ref{thm:main2}, 
which contain the main results of our study in the linear diffusion case.   

Our analysis is based on a layered structure choice of the initial datum. Hence, we fix a function $v$ as in \eqref{vstruct} and, throughout the paper,
 we assume that the initial datum in \eqref{eq:initial} depends on $\e$ and satisfies
\begin{equation}\label{eq:ass-u0}
	\lim_{\varepsilon\rightarrow 0^+} \|u^\varepsilon_0-v\|_{{}_{L^1}}=0.
\end{equation}
In addition, we assume that there exist  $C, \e_0>0$ such that, for any $\e\in(0,\e_0)$,
\begin{equation}\label{eq:energy-ini}
	E_\varepsilon[u^\varepsilon_0]\leq Nc_2+ C R_{\theta,\e},
\end{equation}
where $c_2$ is the constant defined in \eqref{eq:c_0} and
 $R_{\theta,\e}$ is the reminder defined in \eqref{def:Rte}. Specifically,  $u^\e_0$ satisfies the assumptions of Corollary \ref{cor:TLS}.
We recall that in \eqref{eq:translayer} we defined 
 a family of functions that satisfies \eqref{eq:ass-u0} and \eqref{eq:energy-ini}.

\subsection{Exponentially slow motion}\label{sec:exp_met}
In this subsection we show that, when the potential $F$ satisfies \eqref{ipoF2} with $\theta=2$,
 the classical Allen--Cahn metastability phenomenon is inherited by the time-fractional Allen--Cahn system \eqref{eq:C-model}-\eqref{eq:Neu}-\eqref{eq:initial}. Moreover, we show how the Caputo generalization constitutes an additional reinforcement of the metastable dynamics.  
In particular, in the following theorem, we prove that the solutions remain close to the initial layered datum for an
exponentially long time with respect to $\e$ and $\alpha$, that is, at least for a time equal to $m\exp(A/\alpha\e)$, for any $m>0$.   
\begin{thm}[exponentially slow motion when $\theta=2$]\label{thm:main}
Assume that $F\in C^1(\R)$ satisfies \eqref{ipoF1}-\eqref{ipoF2} with $\theta=2$. 
Let $v$ be as in \eqref{vstruct} and $A\in(0,r\sqrt{2\lambda_1})$, with $\lambda_1>0$ (independent on $\e$) as in \eqref{ipoF2}.
If $u^\varepsilon$ is the solution of \eqref{eq:C-model}-\eqref{eq:Neu}-\eqref{eq:initial}
with initial datum $u_0^{\varepsilon}$ satisfying \eqref{eq:ass-u0} and \eqref{eq:energy-ini} with $R_{\theta,\e}=  \exp(-A/\e)$, then
\begin{equation}\label{eq:limit}
	\sup_{0\leq t\leq m\exp\left(
		\frac{A}{\alpha \e}
		\right)}\|u^\varepsilon(\cdot,t)-v\|_{{}_{L^1}}\xrightarrow[\varepsilon\rightarrow0]{}0,
\end{equation}
for any $m>0$.
\end{thm}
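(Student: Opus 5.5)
The plan is the Bronsard--Kohn energy argument, with the classical step ``$L^2$ bound on $u_t$ controls $\|u(t)-u_0\|$'' replaced by a fractional-calculus version driven by Proposition \ref{lem:energy}. Set $\beta:=(1+\alpha)/2\in(1/2,1)$ and $g:=\prescript{C}{}\partial_t^{\beta}u^\e=\prescript{}{0}I_t^{1-\beta}u^\e_t$. Fix $\delta$ from Proposition \ref{prop:lower}, and let $\e$ be small enough that $\|u_0^\e-v\|_{L^1}\le\delta/2$, which \eqref{eq:ass-u0} allows. Define
\[
T_\e:=\sup\{T>0:\ \|u^\e(\cdot,t)-v\|_{L^1}\le\delta \ \text{for all } t\in[0,T]\}.
\]
This is positive by continuity of the solution in time.

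\textbf{Step 1 (dissipation bound).} For $T\le T_\e$, Proposition \ref{prop:lower} gives $E_\e[u^\e](T)\ge Nc_2-Ce^{-A/\e}$. Combining this with \eqref{eq:energy-ini} (with $R_{2,\e}=e^{-A/\e}$) and with \eqref{eq:energyestimate}, I expect
\[
\int_0^T\!\!\int_a^b g^2\,dx\,dt\le \frac{\e}{\sin(\alpha\pi/2)}\bigl(E_\e[u_0^\e]-E_\e[u^\e](T)\bigr)\le C\,\e\, e^{-A/\e}.
\]
Here $C$ depends on $\alpha$ through $\sin(\alpha\pi/2)$, which is harmless since $\alpha$ is fixed.

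\textbf{Step 2 (from fractional derivative to displacement).} By the semigroup property,
\[
\prescript{}{0}I_t^{\beta}g=\prescript{}{0}I_t^{1}u^\e_t=u^\e(t)-u^\e_0 .
\]
I then take the $L^2(a,b)$ norm inside using Minkowski's integral inequality, and afterwards apply Cauchy--Schwarz in $s$. Since $2\beta-2=\alpha-1>-1$, the kernel is square integrable with $\int_0^t(t-s)^{\alpha-1}ds=t^\alpha/\alpha$, so
\[
\|u^\e(t)-u^\e_0\|_{L^2}\le\frac{1}{\Gamma(\beta)}\int_0^t (t-s)^{\beta-1}\|g(s)\|_{L^2}\,ds
\le \frac{t^{\alpha/2}}{\Gamma(\beta)\sqrt{\alpha}}\Bigl(\int_0^t\|g(s)\|_{L^2}^2ds\Bigr)^{1/2}.
\]
Combining with Step 1 gives $\|u^\e(t)-u^\e_0\|_{L^1}\le C\, t^{\alpha/2}\sqrt{\e}\,e^{-A/(2\e)}$ for $t\le T_\e$.

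This computation is exactly where the factor $1/\alpha$ in the time scale comes from: the right-hand side stays $o(1)$ as long as $t^{\alpha/2}\lesssim e^{A/(2\e)}$, that is, for $t\lesssim e^{A/(\alpha\e)}$. I expect this step to be the main point to check. It requires the solution regularity assumed in Proposition \ref{lem:energy}, so that the semigroup identity holds and $g\in L^2$. The rest is routine.

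\textbf{Step 3 (bootstrap).} Suppose $T_\e<m\,e^{A/(\alpha\e)}$. Then at $t=T_\e$ we would have
\[
\|u^\e(T_\e)-v\|_{L^1}\le \|u^\e_0-v\|_{L^1}+C m^{\alpha/2}\sqrt{\e}\le \tfrac{\delta}{2}+Cm^{\alpha/2}\sqrt\e<\delta
\]
for $\e$ small, which contradicts the definition of $T_\e$ by continuity. Hence $T_\e\ge m\,e^{A/(\alpha\e)}$. The same estimate then gives
\[
\sup_{t\le m e^{A/(\alpha\e)}}\|u^\e(t)-v\|_{L^1}\le\|u_0^\e-v\|_{L^1}+Cm^{\alpha/2}\sqrt\e\to0,
\]
which is \eqref{eq:limit}.
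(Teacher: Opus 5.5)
Your proposal is correct and follows essentially the paper's argument: the fractional dissipation inequality \eqref{eq:energyestimate} combined with the lower bound of Proposition \ref{prop:lower} bounds $\prescript{C}{}\partial_t^{(1+\alpha)/2}u^\e$ in $L^2$ by $C\e e^{-A/\e}/\sin(\alpha\pi/2)$, and the representation $u^\e(t)-u_0^\e=\prescript{}{0}I_t^{(1+\alpha)/2}\bigl(\prescript{C}{}\partial_t^{(1+\alpha)/2}u^\e\bigr)$ together with H\"older produces the factor $t^{\alpha/2}$, hence the time scale $e^{A/(\alpha\e)}$. The differences are cosmetic: you close the bootstrap with a first-exit time for $\|u^\e(t)-v\|_{L^1}$, which is arguably cleaner than the paper's auxiliary condition \eqref{claim1} and explicit time $T_{\e,\alpha}$ in Proposition \ref{prop:L2-norm}, and you apply Minkowski in $L^2(a,b)$ before Cauchy--Schwarz, whereas the paper applies H\"older jointly in $(x,s)$.
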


In proving Theorem \ref{thm:main}, we will use an estimate of the $L^2$-norm of the fractional time-derivative of the solution $u^\e(\cdot,t)$ to \eqref{eq:C-model}-\eqref{eq:Neu}-\eqref{eq:initial}, under the same assumptions of Theorem \ref{thm:main}.  This result, presented in the following proposition, is crucial in the proof of Theorem \ref{thm:main}.

\begin{prop}\label{prop:L2-norm}
Under the same assumptions of Theorem \ref{thm:main}, there exist positive constants $\varepsilon_0$, $C$, $C_1>0$ (independent on $\varepsilon$ and $\alpha$) such that
\begin{equation}\label{L2-norm}
	\int_0^{T_{\e,\alpha}}\!\!\int_a^b \left[
		\prescript{C}{}\partial_t^{(1+\alpha)/2} u(x,t)
	\right]^2 dx dt
	\leq \frac{C\e}{\sin(\alpha\pi/2)}\exp\left(-\frac{A}\varepsilon\right),
	\end{equation}
where 
\begin{equation}\label{eq:T_ealpha}
	{T_{\e,\alpha}}:=\left[C_1
	\frac{ \alpha }{\e}\sin\left(\frac{\alpha\pi}{2}\right)
	\right]^{1/\alpha}
	\exp\left(\frac{A}{\alpha \e}\right),
	\end{equation}
for all $\varepsilon\in(0,\varepsilon_0)$.
\end{prop}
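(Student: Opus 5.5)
The plan is the classical Bronsard--Kohn energy argument, with the Caputo derivative of order $(1+\alpha)/2$ in place of $u_t$. Set $\beta:=(1+\alpha)/2\in(1/2,1)$ and $g:={}^C\partial_t^{\beta}u$.

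\textbf{Step 1: energy bound conditional on closeness to $v$.} Fix $\delta$ and $C$ from Proposition \ref{prop:lower}. Suppose $\|u(\cdot,t)-v\|_{L^1}\le\delta$ for all $t\in[0,T]$. Then \eqref{eq:lower} gives $E_\e[u](T)\ge Nc_2-Ce^{-A/\e}$. Combining this with \eqref{eq:energy-ini} and the dissipation inequality \eqref{eq:energyestimate} yields
\begin{equation*}
\int_0^{T}\!\!\int_a^b g^2\,dx\,dt\le \frac{\e}{\sin(\alpha\pi/2)}\bigl(E_\e[u](0)-E_\e[u](T)\bigr)\le \frac{C\e}{\sin(\alpha\pi/2)}e^{-A/\e}.
\end{equation*}
This is exactly \eqref{L2-norm} on $[0,T]$. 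It remains to show that $T$ can be taken equal to $T_{\e,\alpha}$.

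\textbf{Step 2: displacement is controlled by $g$.} By the semigroup property of fractional integrals,
\begin{equation*}
u(x,t)-u_0(x)={}_0I^1_t u_t={}_0I_t^{\beta}\,{}_0I_t^{1-\beta}u_t={}_0I_t^{\beta} g(x,t).
\end{equation*}
Apply Cauchy--Schwarz to the kernel, using $\int_0^t s^{2\beta-2}\,ds=t^{\alpha}/\alpha$:
\begin{equation*}
|u(x,t)-u_0(x)|\le \frac{1}{\Gamma(\beta)}\Bigl(\frac{t^{\alpha}}{\alpha}\Bigr)^{1/2}\|g(x,\cdot)\|_{L^2(0,t)}.
\end{equation*}
Integrate in $x$ and use Cauchy--Schwarz on $(a,b)$. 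Since $\Gamma$ is bounded below on $(1/2,1)$ uniformly in $\alpha$, we get
\begin{equation*}
\|u(\cdot,t)-u_0\|_{L^1}\le C\Bigl(\frac{t^\alpha}{\alpha}\int_0^t\!\!\int_a^b g^2\Bigr)^{1/2}.
\end{equation*}
In Step 1 the bound was in terms of $\int_0^T$ rather than $\int_0^t$. For $t\le T$ we have $\int_0^t\le\int_0^T$, so in both Step 3 and Step 4 the estimate is applied with the already-established bound on $\int_0^T$, via this monotonicity.

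\textbf{Step 3: the role of $T_{\e,\alpha}$.} Insert the Step 1 bound for $t\le T\le T_{\e,\alpha}$:
\begin{equation*}
\|u(\cdot,t)-u_0\|_{L^1}\le C\Bigl(\frac{T_{\e,\alpha}^{\alpha}\,\e\, e^{-A/\e}}{\alpha\sin(\alpha\pi/2)}\Bigr)^{1/2}=C\sqrt{C_1}.
\end{equation*}
The factor $\alpha\sin(\alpha\pi/2)/\e$ in \eqref{eq:T_ealpha} is designed precisely to cancel here. Choose $C_1$ small, independent of $\e$ and $\alpha$, so that $C\sqrt{C_1}\le\delta/4$. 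Choose $\e_0$ so that $\|u_0^\e-v\|_{L^1}\le\delta/4$, which is possible by \eqref{eq:ass-u0}.

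\textbf{Step 4: continuity/bootstrap.} Let
\begin{equation*}
\hat T:=\sup\bigl\{t\le T_{\e,\alpha}:\|u(\cdot,s)-v\|_{L^1}\le\delta\ \text{for all } s\le t\bigr\}.
\end{equation*}
This is positive by continuity of $t\mapsto u(\cdot,t)$ in $L^1$. Steps 1--3 on $[0,\hat T]$ give $\|u(\cdot,t)-v\|_{L^1}\le\delta/2$ there, which is strictly below $\delta$. Continuity then forces $\hat T=T_{\e,\alpha}$, and Step 1 applied with $T=T_{\e,\alpha}$ gives \eqref{L2-norm}.

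The main obstacle is Step 2. One must represent $u(t)-u_0$ through the order-$\beta$ Caputo derivative and track the explicit $\alpha$-dependence of the constant, including the $t^{\alpha/2}\alpha^{-1/2}$ factor. This is what produces the $1/\alpha$ in the exponent of $T_{\e,\alpha}$ and keeps the constants uniform in $\alpha$.
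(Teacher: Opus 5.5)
Your proposal is correct and follows essentially the same route as the paper. The fractional dissipation inequality, combined with the lower bound of Proposition \ref{prop:lower}, gives the $L^2$ bound as long as $u$ stays $\delta$-close to $v$. The representation $u-u_0={}_0I_t^{(1+\alpha)/2}\,{}^C\partial_t^{(1+\alpha)/2}u$, together with Cauchy--Schwarz (which produces the factor $t^\alpha/\alpha$), then shows that the exit time is at least $T_{\e,\alpha}$.

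The only difference is cosmetic. The paper runs the continuity argument on the first time the weighted quantity $\Gamma(\tfrac{1+\alpha}{2})^{-1}\int_0^{\hat T}\!\int_a^b|{}^C\partial_t^{(1+\alpha)/2}u|\,(\hat T-t)^{(\alpha-1)/2}\,dx\,dt$ reaches $\delta/2$. You instead bootstrap directly on $\|u(\cdot,s)-v\|_{L^1}\le\delta$, which is slightly cleaner because it only needs $L^1$-continuity of $u$ in time.
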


\begin{proof}
Consider $\varepsilon_0>0$ so small that for all $\varepsilon\in(0,\varepsilon_0)$, \eqref{eq:energy-ini} holds and 
\begin{equation}\label{1/2delta}
	\|u_0^\varepsilon-v\|_{{}_{L^1}}\leq\frac12\delta,
\end{equation}
where $\delta$ is the constant of Proposition \ref{prop:lower}.  
Now, let $\hat T>0$. If
\begin{equation}\label{claim1}
\frac{1}{\Gamma\left(\frac{1+\alpha}{2}\right)}
	\int_0^{\hat T}\!\!\int_a^b \left|
		\prescript{C}{} \partial_t^{(1+\alpha)/2} u(x,t)   \right|(\hat T -t)^{(\alpha-1)/2}
	 dx dt
	\leq
	\frac12\delta,
\end{equation}
then we claim that there exists a constant $C>0$ such that
\begin{equation}\label{claim2}
	E_\e[u^\e](\hat T) \geq N c_2 - C\exp\left(-\frac{A}{\e}\right).
\end{equation}
In particular, if the condition $\|u^\e(\cdot,\hat T) - v\|_{L^1}\leq \delta$ is satisfied, then \eqref{claim2} follows from Proposition \ref{prop:lower}. 
Since
$$
u^\e(x, t) - u^\e(x,0)  = 
\frac{1}{\Gamma\left(\frac{1+\alpha}{2}\right)}
\int_0^{t}
\prescript{C}{}\partial_s^{(1+\alpha)/2} u(x,s)   ( t - s)^{(\alpha-1)/2}
ds,
$$ 
see e.g. \cite[Theorem 3.8]{Diethelm}, this condition can be proven by first applying the triangular inequality, and then \eqref{1/2delta} with \eqref{claim1}. Indeed,
\begin{align*}
\| u^\e(\cdot,\hat T) - v \|_{L^1} 
& \leq 
\| u^\e(\cdot,\hat T) - u^\e_0 \|_{L^1} + \|  u^\e_0 - v \|_{L^1} 
\\ & \leq
\frac{1}{\Gamma\left(\frac{1+\alpha}{2}\right)}
	\int_0^{\hat T}\!\!\int_a^b \left|
		\prescript{C}{}\partial_t^{(1+\alpha)/2} u(x,t)   \right|(\hat T -t)^{(\alpha-1)/2}
	 dx dt
+    \frac12\delta
\leq \delta.
\end{align*}

Now, by replacing \eqref{eq:energy-ini} (with $R_{\theta,\e}$ defined by \eqref{def:Rte} for $\theta=2$) and \eqref{claim2} in equation \eqref{lem:energy} we obtain 
\begin{equation*}
		  \e^{-1}\sin(\alpha\pi/2)\int_0^{\hat T}\!\!\int_a^b\left[\prescript{C}{}\partial_t^{{(1+\alpha)}/{2}}u(x,t)\right]^2\,dxdt \leq E_\e[u](0)-E_\e[u](\hat T)
		 \leq C\exp\left(-\frac{A}{\e}\right),
\end{equation*}
with $C$ positive constant. Equivalently,
\begin{align}\label{eq:bound_on_L2_norm_Caputo_derivative}
	\int_0^{\hat{T}}\!\!\int_a^b\left[\prescript{C}{}\partial_t^{{(1+\alpha)}/{2}}u(x,t)\right]^2\,dxdt
	\leq \frac{C\e}{\sin(\alpha\pi/2)}\exp\left(-\frac{A}{\e}\right).
\end{align}
It only remains to prove that inequality \eqref{claim1} holds for $\hat T\geq T_{\e,\alpha}$.  
Since \eqref{claim1} is trivially verified for $\hat T = 0$, let us assume the existence of a value $T_*>0$ such that
\begin{equation*}
	\frac{1}{\Gamma\left(\frac{1+\alpha}{2}\right)}
	\int_0^{T_*}\!\!\int_a^b \left|
	\prescript{C}{}\partial_t^{(1+\alpha)/2} u(x,t)   \right|(T_* -t)^{(\alpha-1)/2}
	dx dt=\frac12\delta.
\end{equation*}
Then, by applying H\"older's inequality in the above and \eqref{eq:bound_on_L2_norm_Caputo_derivative}, we have
\begin{align*}
	\frac{1}{2}\delta &\leq
	\frac{1}{\Gamma\left(\frac{1+\alpha}{2}\right)}
	\left[\int_0^{T_*}\!\!\int_a^b
	\left(
	\prescript{C}{}\partial_t^{(1+\alpha)/2} u(x,t)   \right)^2
	dxdt
	\right]^{1/2}
	\left[\int_0^{T_*}\!\!\int_a^b
	(T_* -t)^{(\alpha-1)}
		dxdt
	\right]^{1/2}
	\\
	&\leq 
	\frac{1}{\Gamma\left(\frac{1+\alpha}{2}\right)}
	\left[\frac{C\e}{\sin(\alpha\pi/2)}\exp\left(-\frac{A}{\e}\right)
	\frac{ (b-a)T_*^\alpha}{\alpha}\right]^{1/2},
\end{align*}
so that 
\begin{align*}
	{T_*}
	\geq 
	\left[
	C_1
	\frac{ {\alpha}}{\e}\sin(\alpha\pi/2)
	\right]^{1/\alpha}
	\exp\left(\frac{A}{\alpha \e}\right),
\end{align*}
for some $C_1>0$, independent on $\alpha$ and $\e$. 

On the other hand, if such a $T_*$ does not exist, then  \eqref{claim1} holds for all $\hat T$. Therefore the proof is complete.
\end{proof}

Thanks to this result, we can now prove Theorem \ref{thm:main}.
\begin{proof}[Proof of Theorem \ref{thm:main}]
Fix $m>0$.
As a consequence of the triangle inequality we have that
\begin{equation}\label{trianglebar}
	\sup_{0\leq t\leq m\exp\left(
		\frac{A}{\alpha \e}
		\right)}\|u^\varepsilon(\cdot,t)-v\|_{{}_{L^1}}\leq \sup_{0\leq t\leq m\exp\left(
		\frac{A}{\alpha \e}
		\right)}\|u^\varepsilon(\cdot,t)-u_0^\varepsilon\|_{{}_{L^1}}+\|u_0^\varepsilon-v\|_{{}_{L^1}}.
\end{equation}
Hence, since the second term in the right-hand side of  \eqref{trianglebar} tends to $0$ by \eqref{eq:ass-u0}, in order to prove \eqref{eq:limit}, it is sufficient to show that 
\begin{equation}\label{primopezzo}
	\sup_{0\leq t\leq m\exp\left(
		\frac{A}{\alpha \e}
		\right)}\|u^\varepsilon(\cdot,t)-u_0^\e\|_{{}_{L^1}}\xrightarrow[\varepsilon\rightarrow0]{}0.
\end{equation}
To this aim, we first observe that up to taking $\e>0$ so small that 
$$m<\left[C_1\alpha\sin(\alpha\pi/2)\right]^{1/\alpha}\e^{-1/\alpha},$$
we can apply \eqref{L2-norm} to deduce
\begin{equation}\label{proof:usata}
	\int_0^{m\exp\left(
		\frac{A}{\alpha \e}
		\right)}\!\!\int_a^b \left[
	\prescript{C}{}\partial_t^{(1+\alpha)/2} u(x,t)
	\right]^2 dx dt
	\leq \frac{C\e}{\sin(\alpha\pi/2)}\exp\left(-\frac{A}\varepsilon\right).
\end{equation}
Moreover, for all $t\in\left[0,{m\exp\left(
	\frac{A}{\alpha \e}
	\right)}\right]$, by applying the H\"older's inequality and \eqref{proof:usata}, we have
\begin{align*}
	\|u^\e(\cdot,t)-u^\e_0\|_{{}_{L^1}}
	&\leq\frac{1}{\Gamma\left(\frac{1+\alpha}{2}\right)}
	\int_0^{t}\!\!\int_a^b \left|\prescript{C}{}\partial_s^{(1+\alpha)/2} u(x,s) \right|\left(t -s \right)^{(\alpha-1)/2}dxds\\
	&\leq
	\frac{1}{\Gamma\left(\frac{1+\alpha}{2}\right)}
	\left[\int_0^{t}\!\!\int_a^b \left|\prescript{C}{}\partial_s^{(1+\alpha)/2} u(x,s)\right|^2 dxds\right]^{1/2}
	\left[
	\int_0^{t}\!\!\int_a^b 
	\left(t -s \right)^{\alpha-1}dxds\right]^{1/2}\\
	&\leq\frac{1}{\Gamma\left(\frac{1+\alpha}{2}\right)}
	\sqrt{\frac{C\e}{\sin(\alpha\pi/2)}}\exp\left(-\frac{A}{2\e}\right)
	\sqrt{m^\alpha (b-a)}\exp\left(\frac{A \alpha}{2\alpha\e}\right)
	\leq C_\alpha \sqrt{\e},
\end{align*}
being $C_\alpha$ a positive constant depending on $\alpha$ but independent of $\e$.
We thus obtained \eqref{primopezzo} and the proof is complete.
\end{proof}

\begin{rem}
Theorem \ref{thm:main} describes very well the role of the fractional order $\alpha\in(0,1)$ for long times: 
the fractional derivative has an extra slowing effect on the metastable dynamics of the classical Allen--Cahn equation (obtained by formally setting $\alpha=1$), 
since the time taken for the solution to lose its initial transition structure shifts from $\exp(A/\e)$ to $\exp(A/\alpha\e)$.
\end{rem}

\subsection{Algebraic slow motion}\label{sec:alg-lin}
When the potential $F$ satisfies assumption \eqref{ipoF2} with $\theta>2$, the solutions to \eqref{eq:C-model}-\eqref{eq:Neu}-\eqref{eq:initial} evolve substantially differently from the critical case $\theta=2$, presented in the previous section. Indeed, when $\theta=2$, 
the \emph{non-degenerate potentials} give rise to exponentially slow motion, as described by
Theorem \ref{thm:main}.
Instead, in this subsection, we show that, when $\theta>2$, the solution maintains the same unstable profile of the initial datum \emph{only} for an algebraically long time with respect to $\e$, that is, at least for a time equals $ l\e^{-\beta/\alpha}$, for any $l>0$, with $\beta>0$ defined in \eqref{eq:beta}. Also in this case, the fractional order $\alpha$ contributes to a further slow down of the dynamics. The passage from exponential to algebraic slow motion results from the application of Proposition \ref{prop:lower_deg} instead of Proposition \ref{prop:lower}, and, particularly, it is related to the reminder in the energy lower bounds.
This constitutes the main result of this subsection, as described by the following theorem.

\begin{thm}[algebraic slow motion when $\theta>2$]\label{thm:main2}
Assume that $F\in C^1(\R)$ satisfies \eqref{ipoF1}-\eqref{ipoF2} with $\theta>2$. 
Moreover, let $v$ be as in \eqref{vstruct} and let $\left\{k_j\right\}_{j\in\mathbb N}$ be as in \eqref{eq:exp_alg}. 
If $u^\varepsilon$ is the solution to \eqref{eq:C-model}-\eqref{eq:Neu}-\eqref{eq:initial},
with initial profile $u_0^{\varepsilon}$ satisfying \eqref{eq:ass-u0} and \eqref{eq:energy-ini} with $R_{\theta,\e}=\e^{k_j}$, then 
\begin{equation}\label{eq:limit-deg}
	\sup_{0\leq t\leq l{\e^{-k_j/\alpha}}}\|u^\varepsilon(\cdot,t)-v\|_{{}_{L^1}}\xrightarrow[\varepsilon\rightarrow0]{}0,
\end{equation}
for any $l>0$.
\end{thm}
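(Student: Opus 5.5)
The plan is to mimic the proof of Theorem \ref{thm:main}, replacing Proposition \ref{prop:lower} by Proposition \ref{prop:lower_deg}. The first step is an analogue of Proposition \ref{prop:L2-norm}. We aim to show that there exist $\e_0,C,C_1>0$ such that
\begin{equation*}
	\int_0^{T_{\e,\alpha}}\!\!\int_a^b\left[\prescript{C}{}\partial_t^{(1+\alpha)/2}u(x,t)\right]^2dxdt\leq\frac{C\e}{\sin(\alpha\pi/2)}\,\e^{k_{j+1}},
\end{equation*}
with $T_{\e,\alpha}:=\left[C_1\alpha\e^{-1}\sin(\alpha\pi/2)\right]^{1/\alpha}\e^{-k_{j+1}/\alpha}$.

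To prove this, we choose $\e_0$ so that $\|u_0^\e-v\|_{L^1}\leq\delta_j/2$ and \eqref{eq:energy-ini} holds with $R_{\theta,\e}=\e^{k_j}$. For any $\hat T$ satisfying the smallness condition \eqref{claim1} with $\delta_j$ in place of $\delta$, the representation $u(\cdot,\hat T)-u_0=\prescript{}{0}I_t^{(1+\alpha)/2}\,\prescript{C}{}\partial_t^{(1+\alpha)/2}u$ gives $\|u(\cdot,\hat T)-v\|_{L^1}\leq\delta_j$.

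Proposition \ref{prop:lower_deg} also requires the upper bound \eqref{E_p(w)<Nc0+eps^l} at time $\hat T$, namely $E_\e[u](\hat T)\leq Nc_2+C\e^{k_j}$. This follows from the monotonicity \eqref{remark:E(t)<E(0)} together with \eqref{eq:energy-ini}. Proposition \ref{prop:lower_deg} then yields $E_\e[u](\hat T)\geq Nc_2-C_j\e^{k_{j+1}}$. Combining this with Proposition \ref{lem:energy} bounds the space-time $L^2$ norm by $\e\,(C\e^{k_j}+C_j\e^{k_{j+1}})/\sin(\alpha\pi/2)$.

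This is where I expect the main subtlety. Since $k_{j+1}>k_j$, the dominant term is $\e^{k_j}$, not $\e^{k_{j+1}}$. So the honest estimate is $\int\!\!\int[\cdot]^2\leq C\e^{1+k_j}/\sin(\alpha\pi/2)$, and the time scale becomes $T_{\e,\alpha}=[C_1\alpha\e^{-1}\sin(\alpha\pi/2)]^{1/\alpha}\e^{-k_j/\alpha}$. This matches exactly the time $l\e^{-k_j/\alpha}$ in \eqref{eq:limit-deg}, so I will work with $\e^{k_j}$ throughout.

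The continuation argument is then identical to Proposition \ref{prop:L2-norm}. If a first time $T_*$ exists at which the quantity in \eqref{claim1} equals $\delta_j/2$, Hölder's inequality gives
\begin{equation*}
	\tfrac12\delta_j\leq\Gamma\left(\tfrac{1+\alpha}{2}\right)^{-1}\left[C\e^{1+k_j}(b-a)T_*^\alpha/(\alpha\sin(\alpha\pi/2))\right]^{1/2},
\end{equation*}
which forces $T_*\geq T_{\e,\alpha}$. If no such time exists, the bound holds for all times.

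Finally, fix $l>0$ and take $\e$ small enough that $l\leq[C_1\alpha\sin(\alpha\pi/2)]^{1/\alpha}\e^{-1/\alpha}$, so that $l\e^{-k_j/\alpha}\leq T_{\e,\alpha}$. For $t\leq l\e^{-k_j/\alpha}$, the same Hölder estimate as in the proof of Theorem \ref{thm:main} gives
\begin{equation*}
	\|u(\cdot,t)-u_0^\e\|_{L^1}\leq C_\alpha\left(\e^{1+k_j}\right)^{1/2}\left(l^\alpha\e^{-k_j}\right)^{1/2}=C_\alpha\sqrt{\e}.
\end{equation*}
This tends to $0$ as $\e\to0$. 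Combined with \eqref{eq:ass-u0} and the triangle inequality \eqref{trianglebar}, this proves \eqref{eq:limit-deg}.

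Apart from correctly tracking which power of $\e$ dominates, the only point needing care is verifying the hypothesis \eqref{E_p(w)<Nc0+eps^l} of Proposition \ref{prop:lower_deg} at every time $\hat T$. This is supplied by energy monotonicity, which is the one genuinely fractional ingredient beyond Proposition \ref{lem:energy}.
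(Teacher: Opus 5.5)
Your argument is correct and is the paper's route: the paper also proves Theorem \ref{thm:main2} by rerunning the proof of Theorem \ref{thm:main} with Proposition \ref{prop:lower_deg} in place of Proposition \ref{prop:lower}, via Proposition \ref{prop:L2-norm_algebraic}. That proposition's bound $C\e^{k_j+1}/\sin(\alpha\pi/2)$ and time $T_{\e,\alpha}=[C_j\alpha\sin(\alpha\pi/2)]^{1/\alpha}\e^{-(k_j+1)/\alpha}$ coincide with your corrected estimate (so discard your initial aim with $\e^{k_{j+1}}$), and your explicit check of \eqref{E_p(w)<Nc0+eps^l} at time $\hat T$ via \eqref{remark:E(t)<E(0)} is a detail the paper leaves implicit.
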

The proof of Theorem \ref{thm:main2} follows the same steps of the proof of Theorem \ref{thm:main}. In particular, it uses Proposition \ref{prop:lower_deg} and the following Proposition \ref{prop:L2-norm_algebraic}, instead of, respectively, Proposition \ref{prop:lower} and \ref{prop:L2-norm}.
Indeed, similarly to Proposition \ref{prop:L2-norm}, when $\theta>2$ in the definition  \eqref{ipoF1}-\eqref{ipoF2}  of $F$, one can prove the following estimate.
\begin{prop}\label{prop:L2-norm_algebraic}
Under the same assumptions of Theorem \ref{thm:main2}, there exist positive constants $\varepsilon_0$, $C$, $C_j>0$ (independent on $\varepsilon$ and $\alpha$) such that
\begin{equation}\label{L2-norm_algebraic}
	\int_0^{T_{\e,\alpha}}
	\int_a^b \left[
	\prescript{C}{}\partial_t^{(1+\alpha)/2} u(x,t)
	\right]^2 dx dt
	\leq \frac{C}{\sin(\alpha\pi/2)}\e^{k_j+1},
\end{equation}
where 
\begin{equation}\label{eq:T_ealpha_algebraic}
	{T_{\e,\alpha}}
	:= 
	\left[
	C_j 
	\alpha
	\sin(\alpha\pi/2)
	\right]^{1/\alpha}
	\e^{-(k_j+1)/\alpha},
\end{equation}
for all $\varepsilon\in(0,\varepsilon_0)$.
\end{prop}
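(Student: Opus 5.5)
The argument copies the proof of Proposition \ref{prop:L2-norm}, with Proposition \ref{prop:lower_deg} in place of Proposition \ref{prop:lower} and the exponential remainder replaced by $\e^{k_j}$. First I choose $\e_0$ so small that, for $\e\in(0,\e_0)$, the initial energy bound \eqref{eq:energy-ini} holds with $R_{\theta,\e}=\e^{k_j}$ and $\|u_0^\e-v\|_{L^1}\le\frac12\delta_j$, where $\delta_j$ is the constant of Proposition \ref{prop:lower_deg}.

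Next, for $\hat T>0$ I impose the analogue of \eqref{claim1} with $\delta_j$ in place of $\delta$. Write $u^\e(\cdot,\hat T)-u_0^\e$ as the fractional integral of order $(1+\alpha)/2$ of $\prescript{C}{}\partial_t^{(1+\alpha)/2}u$. The triangle inequality then gives $\|u^\e(\cdot,\hat T)-v\|_{L^1}\le\delta_j$. By \eqref{remark:E(t)<E(0)} and \eqref{eq:energy-ini}, $E_\e[u](\hat T)\le Nc_2+C\e^{k_j}$. Both hypotheses \eqref{|w-v|_L^1<delta_l} and \eqref{E_p(w)<Nc0+eps^l} of Proposition \ref{prop:lower_deg} therefore hold at time $\hat T$. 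This is the only genuinely new point compared with the case $\theta=2$: the degenerate lower bound needs the a priori upper energy bound, and the monotonicity \eqref{remark:E(t)<E(0)} supplies it. The proposition yields $E_\e[u](\hat T)\ge Nc_2-C_j\e^{k_{j+1}}\ge Nc_2-C_j\e^{k_j}$, since $k_{j+1}>k_j$ and $\e<1$. Inserting this into Proposition \ref{lem:energy} gives
\[
\int_0^{\hat T}\!\!\int_a^b\left[\prescript{C}{}\partial_t^{(1+\alpha)/2}u\right]^2dx\,dt\le \frac{C\e}{\sin(\alpha\pi/2)}\,\e^{k_j}=\frac{C}{\sin(\alpha\pi/2)}\e^{k_j+1}.
\]
(Using $k_{j+1}$ would give a slightly better estimate, but the initial datum only provides $\e^{k_j}$, so that exponent is the limiting one.)

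Finally, I show that the smallness condition persists up to $T_{\e,\alpha}$. Let $T_*$ be the first time at which the weighted integral equals $\frac12\delta_j$. Hölder's inequality, the $L^2$ bound just obtained, and $\int_0^{T_*}(T_*-t)^{\alpha-1}dt=T_*^\alpha/\alpha$ give
\[
\tfrac12\delta_j\le\Gamma\!\left(\tfrac{1+\alpha}{2}\right)^{-1}\left[\frac{C\e^{k_j+1}}{\sin(\alpha\pi/2)}\frac{(b-a)T_*^\alpha}{\alpha}\right]^{1/2}.
\]
Solving for $T_*$ yields $T_*\ge [C_j\alpha\sin(\alpha\pi/2)]^{1/\alpha}\e^{-(k_j+1)/\alpha}$, which is exactly $T_{\e,\alpha}$ in \eqref{eq:T_ealpha_algebraic}. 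Here $\Gamma((1+\alpha)/2)$ is bounded above and below uniformly for $\alpha\in(0,1)$, so $C_j$ can be taken independent of $\alpha$. If no such $T_*$ exists, the condition holds for every $\hat T$. In either case, taking $\hat T=T_{\e,\alpha}$ gives \eqref{L2-norm_algebraic}.

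The main obstacle is the continuity argument defining $T_*$. It needs continuity in $\hat T$ of the weighted integral, which follows from the regularity of $u$, and it needs the energy upper bound at every intermediate time; \eqref{remark:E(t)<E(0)} handles the latter.
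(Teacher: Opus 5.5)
Your proposal is correct and follows the paper's intended route. The paper gives no separate proof; it only says to repeat the argument of Proposition \ref{prop:L2-norm} with Proposition \ref{prop:lower_deg} in place of Proposition \ref{prop:lower}, and that is what you do. You also check two details the paper leaves implicit: that the upper energy hypothesis \eqref{E_p(w)<Nc0+eps^l} holds at every intermediate time via \eqref{remark:E(t)<E(0)}, and that the initial-data remainder $\e^{k_j}$, rather than $\e^{k_{j+1}}$, fixes the exponent $k_j+1$.
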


\subsection{Layer Dynamics}
In this subsection, we conclude our investigation on the time-fractional Allen--Cahn model \eqref{eq:C-model}-\eqref{eq:Neu}-\eqref{eq:initial}, describing the slow motion of the transition points $h_1,\ldots,h_N$, both in the critical  and supercritical case, respectively $\theta=2$ and $\theta>2$. Here, our main result is described by the following Theorem \ref{thm:interface}, which shows  that the transition layers evolve with a velocity which goes to zero as $\e\to0^+$.  

Before presenting the theorem, let us fix a function $v$ as in \eqref{vstruct} and define its {\it interface} $I[v]$ as the set
\begin{equation*}
	I[v]:=\{h_1,h_2,\ldots,h_N\}.
\end{equation*}
More generally, for any function $u:[a,b]\rightarrow\mathbb{R}$ and for any closed subset $K\subset(-1,1)$, we define
the {\it interface} $I_K[u]$ by
\begin{equation*}
	I_K[u]:=u^{-1}(K).
\end{equation*}

The following lemma establishes that, if a function $u\in H^1([a,b])$ is close to $v$ in the $L^1$-norm and its energy is greater than the minimum energy defined in Corollary \ref{cor:TLS} associated to the $N$ transitions, then the distance between the interfaces of $u$ and $v$ remains small. Here the distance is the {\it Hausdorff distance} is defined by
\begin{equation*}
	d(A,B):=\max\biggl\{\sup_{\alpha\in A}d(\alpha,B),\,\sup_{\beta\in B}d(\beta,A)\biggr\},
\end{equation*}
where $d(\beta,A):=\inf\{|\beta-\alpha|: \alpha\in A\}$, for every $\beta\in B $, where $A, B\subseteq\R$.

\begin{lem}\label{lem:interface}
Assume that $F\in C^1(\R)$ satisfies \eqref{ipoF1}-\eqref{ipoF2} with $\theta\geq 2$ 
and let $v$ be as in \eqref{vstruct}.
Given $\delta_1\in(0,r)$ and a closed subset $K\subset(-1,1)$, 
there exist positive constants $\hat\delta,\varepsilon_0$  such that, if for all $\varepsilon\in(0,\varepsilon_0)$  $u\in H^1([a,b])$ satisfies
\begin{equation}\label{eq:lem-interf}
	\|u-v\|_{{}_{L^1}}<\hat\delta \qquad \quad \mbox{ and } \qquad \quad E_\varepsilon[u]\leq Nc_2+M_\e,
\end{equation}
for some $M_\e>0$ and with $E_\e[u]$ defined in \eqref{eq:energy}, we have
\begin{equation}\label{lem:d-interfaces}
	d(I_K[u], I[v])<\tfrac12\delta_1.
\end{equation}
\end{lem}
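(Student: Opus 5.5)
The proof splits into the two halves of the Hausdorff distance. The estimates are pointwise consequences of $L^1$-closeness, combined with the Modica--Mortola inequality $\frac{\e u_x^2}{2}+\frac{F(u)}{\e}\ge |u_x|\sqrt{2F(u)}$ used in \eqref{eq:c_0}. Since $u\in H^1(a,b)\subset C([a,b])$, values of $u$ can be handled pointwise.

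Throughout I read $M_\e$ as small, say $M_\e\to0$ as $\e\to0^+$ (or at least $M_\e$ below a threshold depending only on $F$ and $K$). This is how the lemma will be used, with $M_\e=CR_{\theta,\e}$. The case $\theta\ge2$ plays no role beyond \eqref{ipoF1}.

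\emph{Step 1 (choice of points).} Fix $\rho>0$ small, to be specified later. Take $\hat\delta\le \rho\,\delta_1/4$ (shrinking it further if needed). Then on each interval $(h_i-\delta_1/2,h_i)$ and $(h_i,h_i+\delta_1/2)$ there is a point where $|u-v|<\rho$. Otherwise $\|u-v\|_{L^1}\ge \rho\delta_1/2>\hat\delta$. The same holds on any interval of length $\delta_1/4$ contained in a region where $v$ is constant.

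So for each $i$ there are $y_i^-\in(h_i-\delta_1/2,h_i)$ and $y_i^+\in(h_i,h_i+\delta_1/2)$ where $u$ is $\rho$-close to the two opposite values $\pm1$ taken by $v$ there. By the intermediate value theorem and $K\subset(-1+\rho,1-\rho)$ (valid for $\rho$ small, since $K$ is compact in $(-1,1)$), $u$ takes values in $K$ on $(y_i^-,y_i^+)$. Hence $d(h_i,I_K[u])<\delta_1/2$ for every $i$, which is one half of \eqref{lem:d-interfaces}.

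\emph{Step 2 (no spurious interface points).} Suppose, by contradiction, that $x_0\in I_K[u]$ satisfies $|x_0-h_i|\ge\delta_1/2$ for all $i$. Then $x_0$ lies in a region where $v\equiv\pm1$, say $+1$, at distance at least $\delta_1/2$ from every jump. By Step 1 I can find points $z^-<x_0<z^+$ with $u(z^\pm)>1-\rho$. These points can be chosen inside the same constancy interval of $v$, and disjoint from the intervals $(y_i^-,y_i^+)$ (shrinking those to length $\delta_1/4$ if necessary). If $x_0$ lies within $\delta_1/4$ of $a$ or $b$, one side is simply omitted, and only half of the extra bound below is needed.

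Now split $[a,b]$ into the $N$ disjoint intervals $(y_i^-,y_i^+)$, the intervals $(z^-,x_0)$ and $(x_0,z^+)$, and the rest. By Modica--Mortola,
\begin{equation*}
E_\e[u]\ge N\int_{-1+\rho}^{1-\rho}\sqrt{2F(s)}\,ds+2\min_{k\in K}\Bigl|\int_{k}^{1-\rho}\sqrt{2F(s)}\,ds\Bigr|.
\end{equation*}
The same bound holds with $-1+\rho$ when $v\equiv-1$ near $x_0$. Set $c_K:=\min_{k\in K}\min\bigl\{\int_k^{1}\sqrt{2F},\int_{-1}^k\sqrt{2F}\bigr\}>0$, which is positive by \eqref{ipoF1} and the compactness of $K\subset(-1,1)$. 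Choose $\rho$ so small that $N\int_{[-1,1]\setminus(-1+\rho,1-\rho)}\sqrt{2F}<c_K/4$ and that each $K$-to-$(1-\rho)$ integral is at least $\frac34c_K$. This gives $E_\e[u]\ge Nc_2+c_K$. That contradicts $E_\e[u]\le Nc_2+M_\e$ once $\e_0$ is chosen with $M_\e<c_K$ for $\e<\e_0$.

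\emph{Main obstacle.} The delicate point is the bookkeeping in Step 2. The intervals carrying the $N$ essential transitions and the two extra excursions around $x_0$ must be genuinely disjoint. The boundary cases also need care: $x_0$ near $a$ or $b$, and $x_0$ between two jumps but far from both. I would handle this by selecting all points from Step 1 inside disjoint subintervals of length $\delta_1/4$. I would also make sure the order of choices is consistent: first $\rho$ (depending on $F,K,N$), then $\hat\delta$ (depending on $\rho,\delta_1$), then $\e_0$ (depending on the smallness of $M_\e$).
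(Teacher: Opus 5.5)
Your proof is correct and is essentially the standard Bronsard--Kohn/Grant argument behind Lemma 4.5 of the DLFS reference, which is all the paper offers (it gives no proof of its own). As there, $L^1$-closeness yields points $\rho$-close to $\pm1$ on either side of each $h_i$, the intermediate value theorem gives $d(h_i,I_K[u])<\delta_1/2$, and the Modica--Mortola bound on disjoint intervals turns a spurious point of $I_K[u]$ into an energy excess of order $c_K$, contradicting $E_\e[u]\le Nc_2+M_\e$.

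You are right that $M_\e$ must be small; with an arbitrary $M_\e>0$, as literally stated, the lemma fails. Two small fixes: in your one-sided boundary case the excess is only $c_K/2$, so choose $\e_0$ with $M_\e<c_K/2$. Also, $I_K[u]$ is compact, so the supremum in the Hausdorff distance is attained, which gives the strict inequality.
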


\begin{proof}
 The proof can be found in \cite[Lemma 4.5]{DLFS}.
\end{proof}

The above lemma, together with Theorems \ref{thm:main}, \ref{thm:main2} allows us to prove the following result. 
Following \cite{Bron-Kohn,DLFS,FPS-DCDS,Grant}, Theorem \ref{thm:interface} establishes that, in order for the distance between the interface of the time-dependent solution $u^\e(\cdot,t)$ and that of the initial datum to exceed a certain quantity, one has to wait at least an exponentially long time in $\e$ when $\theta=2$, or an algebraically long time in $\e$ in the supercritical case $\theta>2$.

\begin{thm}\label{thm:interface}
Let $F\in C^1(\R)$ satisfy \eqref{ipoF1}-\eqref{ipoF2} with {$\theta\geq 2$} and $u^\varepsilon$ be the solution of \eqref{eq:C-model}-\eqref{eq:Neu}-\eqref{eq:initial}, 
with initial datum $u_0^{\varepsilon}$ satisfying \eqref{eq:ass-u0} and \eqref{eq:energy-ini}. 
Given $\delta_1\in(0,r)$ and a closed subset $K\subset(-1,1)$, set
\begin{equation*}
	t_\varepsilon(\delta_1)=\inf\{t:\; d(I_K[u^\varepsilon(\cdot,t)],I_K[u_0^\varepsilon])>\delta_1\}.
\end{equation*}
Then, there exists $\varepsilon_0>0$ such that if $\varepsilon\in(0,\varepsilon_0)$
\begin{equation*}
	t_\varepsilon(\delta_1)> \left\{ \begin{aligned}
& \exp\left(A/(\alpha\varepsilon)\right), \quad  &\mbox{if}  \quad &\theta=2, \\
& \e^{-k_j/\alpha},   \quad  &\mbox{if}  \quad &\theta>2,
\end{aligned}\right.
\end{equation*}
where $A$ and $k_j$ are defined as in Propositions \ref{prop:lower} and \ref{prop:lower_deg}, respectively. 
\end{thm}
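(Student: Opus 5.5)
The argument follows the classical scheme of Bronsard--Kohn and Grant, adapted as in \cite{DLFS,FPS-DCDS}. The two ingredients are Lemma \ref{lem:interface}, which controls interfaces, and the uniform $L^1$ closeness to $v$ on long time intervals furnished by Theorems \ref{thm:main}--\ref{thm:main2}. Fix $\delta_1\in(0,r)$ and the closed set $K\subset(-1,1)$. Lemma \ref{lem:interface} then provides constants $\hat\delta,\e_0$. The plan is to verify its two hypotheses \eqref{eq:lem-interf} both for $u_0^\e$ and for $u^\e(\cdot,t)$ at every $t$ up to the claimed time. This yields
\begin{equation*}
d(I_K[u_0^\e],I[v])<\tfrac12\delta_1,\qquad d(I_K[u^\e(\cdot,t)],I[v])<\tfrac12\delta_1,
\end{equation*}
and the triangle inequality for the Hausdorff distance then gives $d(I_K[u^\e(\cdot,t)],I_K[u_0^\e])<\delta_1$. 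Consequently $t_\e(\delta_1)$ cannot lie in that interval.

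The energy hypothesis is the easy one. By \eqref{eq:energy-ini}, $E_\e[u_0^\e]\le Nc_2+CR_{\theta,\e}$. By the dissipation inequality \eqref{remark:E(t)<E(0)}, a consequence of Proposition \ref{lem:energy}, the same bound $E_\e[u^\e(\cdot,t)]\le Nc_2+CR_{\theta,\e}$ holds for all $t>0$. One may therefore take $M_\e=CR_{\theta,\e}$, uniformly in time.

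The $L^1$ hypothesis is where the time scale enters. For $u_0^\e$ it follows from \eqref{eq:ass-u0} once $\e$ is small. For $u^\e(\cdot,t)$ in the case $\theta=2$, I would apply Theorem \ref{thm:main} with $m=1$ (any $m>1$ works as well). This gives $\sup_{0\le t\le \exp(A/(\alpha\e))}\|u^\e(\cdot,t)-v\|_{L^1}\to0$, so after shrinking $\e_0$ the supremum is smaller than $\hat\delta$. In the case $\theta>2$, Theorem \ref{thm:main2} with $l=1$ gives the same conclusion on $[0,\e^{-k_j/\alpha}]$. To obtain the strict inequality $t_\e(\delta_1)>T$, I would use $m=2$ (respectively $l=2$). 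Then the Hausdorff distance stays below $\delta_1$ on a strictly larger interval, and the infimum exceeds $\exp(A/(\alpha\e))$ (respectively $\e^{-k_j/\alpha}$).

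The main obstacle is bookkeeping rather than analysis. Lemma \ref{lem:interface} must be applicable with one choice of $\hat\delta,\e_0$, uniformly over all times in the interval. This works because the lemma's constants depend only on $\delta_1$, $K$, $v$ and $F$, and not on $M_\e$ beyond its being admissible. A related check is that the lemma also requires $I_K[u]$ to be nonempty near each $h_i$. This is part of what the lemma asserts from the energy bound $Nc_2+M_\e$ with $M_\e\to0$, so I would confirm that the cited \cite[Lemma 4.5]{DLFS} indeed allows $M_\e=CR_{\theta,\e}$, which tends to zero.
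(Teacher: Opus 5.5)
Your proposal is correct and follows the paper's proof essentially verbatim. You apply Lemma~\ref{lem:interface} to $u_0^\e$ via \eqref{eq:ass-u0}--\eqref{eq:energy-ini}, and to $u^\e(\cdot,t)$ via Theorems~\ref{thm:main}--\ref{thm:main2} for the $L^1$ condition and \eqref{remark:E(t)<E(0)} for the energy condition, then conclude with the triangle inequality for the Hausdorff distance. Your choice $m=2$ (resp.\ $l=2$) is a small but genuine improvement: the paper only shows $d<\delta_1$ on the closed interval up to $\exp(A/(\alpha\e))$ (resp.\ $\e^{-k_j/\alpha}$), which by itself gives $t_\e(\delta_1)\geq$ that time rather than the strict inequality stated.
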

\begin{proof}
Since $u_0^\e$ satisfies \eqref{eq:ass-u0} and \eqref{eq:energy-ini}, one can always find a small   $\varepsilon_0>0$ such that  $u_0^\e $ automatically verifies assumption \eqref{eq:lem-interf} for all $\varepsilon\in(0,\varepsilon_0)$. 
Therefore, we can apply Lemma \ref{lem:interface}, 
obtaining that
\begin{equation}\label{interfaces-u0}
	d(I_K[u_0^\varepsilon], I[v])<\tfrac12\delta_1,
\end{equation}
 for all $\varepsilon\in(0,\varepsilon_0)$.
 
Now, let us fix $\varepsilon\in(0,\varepsilon_0)$
and consider $u^\varepsilon(\cdot,t)$ for
\begin{equation}\label{time}
t\leq\left\{ \begin{aligned}
& \exp(A/\alpha\varepsilon) \quad  &\mbox{if}  \quad &\theta=2, \\
& \e^{-k_j/\alpha}  \quad  &\mbox{if}  \quad &\theta>2.
\end{aligned}\right.
\end{equation}
Then, $u^\varepsilon(\cdot,t)$ satisfies both the conditions in \eqref{eq:lem-interf}: the first one follows from \eqref{eq:limit} when $\theta=2$ and from \eqref{eq:limit-deg} when $\theta>2$; the second condition follows from \eqref{remark:E(t)<E(0)}.
Again, we can apply Lemma \ref{lem:interface} and conclude that
\begin{equation}\label{interfaces-u}
	d(I_K[u^\varepsilon(\cdot,t)], I[v])<\tfrac12\delta_1
\end{equation}
for all $t$ as in \eqref{time}.  
Therefore, by combining \eqref{interfaces-u0} and \eqref{interfaces-u}, and using the triangle inequality, we obtain $d(I_K[u^\varepsilon(\cdot,t)],I_K[u_0^\varepsilon])<\delta_1$ for all $t$ as in \eqref{time},
which proves the theorem.
\end{proof}

\section{The nonlinear diffusion case}\label{sec:nonlinear}
\subsection{The $p$-Laplacian diffusion}\label{sec:p-lap}
Here we consider equation \eqref{eq:p-lap} with homogeneous Neumann boundary conditions \eqref{eq:Neu} and initial condition \eqref{eq:initial}.
The first step is to prove a dissipative result equivalent to \eqref{lem:energy}; 
in this case we define the energy
\begin{equation}\label{eq:energy-p}
	\mathcal{E}_{\e,p}[u]= \int_a^b\left[\frac{\e^{p-1}|u_x|^p}{p}+\frac{F(u)}{\e}\right]\,dx,
\end{equation}
and we have the following result.
\begin{lem}\label{lem:energy-p}
Let $u$ be a sufficiently regular solution of \eqref{eq:p-lap}-\eqref{eq:Neu}-\eqref{eq:initial}.
If $\mathcal{E}_{\e,p}$ is the functional defined in \eqref{eq:energy-p} then 
\begin{equation}\label{eq:energy-dec}
	\mathcal{E}_{\e,p}[u](0)-\mathcal{E}_{\e,p}[u](T)\geq \e^{-1}\sin\left(\frac{\alpha\pi}{2}\right)\int_0^T\!\!\int_a^b\left[\prescript{C}{}\partial_t^{(1+\alpha)/2}u(x,t)\right]^2\,dxdt\geq0,
\end{equation}
for any $\alpha\in(0,1)$.
\end{lem}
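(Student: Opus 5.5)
The proof follows the scheme of Proposition \ref{lem:energy}, with the $p$-Laplacian term replacing the linear diffusion. We multiply \eqref{eq:p-lap} by $u_t$ and integrate over $(a,b)$. For the diffusion term we integrate by parts, and the boundary contribution $\e^p|u_x|^{p-2}u_x u_t\big|_a^b$ vanishes by the Neumann conditions \eqref{eq:Neu}. This gives
$$
\int_a^b \e^p(|u_x|^{p-2}u_x)_x u_t\,dx=-\e^p\int_a^b |u_x|^{p-2}u_x u_{xt}\,dx=-\frac{\e^p}{p}\frac{d}{dt}\int_a^b |u_x|^p\,dx,
$$
using $\partial_t |u_x|^p = p|u_x|^{p-2}u_xu_{xt}$, which is valid for $p>1$ under enough regularity. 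For the reaction term we have $-\int_a^b F'(u)u_t\,dx=-\frac{d}{dt}\int_a^b F(u)\,dx$. Combining the two, the right-hand side equals $-\e\,\frac{d}{dt}\mathcal{E}_{\e,p}[u](t)$, with $\mathcal{E}_{\e,p}$ as in \eqref{eq:energy-p}; this is exactly why the weights $\e^{p-1}/p$ and $1/\e$ appear in the energy.

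Next we divide by $\e$ and integrate over $(0,T)$, which yields
$$
\e^{-1}\int_0^T\!\!\int_a^b \prescript{C}{}\partial_t^\alpha u\, u_t\,dxdt=\mathcal{E}_{\e,p}[u](0)-\mathcal{E}_{\e,p}[u](T).
$$
The left-hand side involves only the Caputo derivative and $u_t$, not the diffusion operator. So the claim \eqref{eq:derivatives_lucho} from the proof of Proposition \ref{lem:energy} applies verbatim, pointwise in $x$. That claim rests on three ingredients: the semigroup property of fractional integrals, fractional integration by parts, and the Fourier/Plancherel computation producing the factor $\cos((1-\alpha)\pi/2)=\sin(\alpha\pi/2)$. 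Inserting it gives \eqref{eq:energy-dec}, and the nonnegativity follows from $\sin(\alpha\pi/2)>0$ for $\alpha\in(0,1)$.

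The only delicate point is regularity, since the statement only says ``sufficiently regular''. We need $u_t(x,\cdot)\in L^2(0,T)$ for a.e. $x$, so that the argument of Proposition \ref{lem:energy} works. We also need $|u_x|^{p-2}u_x$ to have enough regularity in $x$, with $u_{xt}$ existing, to justify the integration by parts and the chain rule for $\frac{d}{dt}|u_x|^p$; this is the main point when $1<p<2$, where $|s|^p$ is only $C^1$. I would take a solution regular enough that these steps hold classically. Alternatively, one can note that $s\mapsto|s|^p/p$ is $C^1$ with derivative $|s|^{p-2}s$, so the chain rule holds for $u_x\in C^1([0,T],L^p)$, and then everything goes through as in the linear case.
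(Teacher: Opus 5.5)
Your proposal is correct and follows the paper's proof. You multiply by $u_t$ and integrate by parts with the Neumann conditions to obtain $-\e\frac{d}{dt}\mathcal{E}_{\e,p}[u]$, integrate in time, and then invoke \eqref{eq:derivatives_lucho}, which involves only the Caputo derivative and is independent of the diffusion operator. Your remark on the chain rule for $|u_x|^p$ when $1<p<2$ usefully makes the paper's ``sufficiently regular'' hypothesis concrete.
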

\begin{proof}
Proceeding as in the proof of \eqref{eq:energyestimate}, and multiplying \eqref{eq:p-lap} by $u_t$ and integrating over the spatial domain $(a,b)$, we obtain 
 $$
\int_{a}^{b} 
\prescript{C}{}\partial_t^{\alpha}u(x,t) u_t(x,t) \, dx = \int_{a}^{b} \left[\e^p(|u_x|^{p-2}u_x)_x-F'(u)\right]u_t(x,t)\,dx.
$$
By using integration by parts on the right-hand side and the boundary Neumann conditions \eqref{eq:Neu}, we deduce (see also \cite[Lemma 3.1]{FPS-DCDS})
$$
 \int_{a}^{b} \left[\e^p(|u_x|^{p-2}u_x)_x-F'(u)\right]u_t(x,t)\,dx=-\e\frac{d}{dt}\mathcal{E}_{\e,p}[u](t).
$$
Thus, integrating with respect to time, one has
$$
\e^{-1}  \int_{0}^{T}\!\int_{a}^{b}
\prescript{C}{}\partial_t^{\alpha}u(x,t) u_t(x,t) \, dx \, dt = \mathcal{E}_{\e,p}[u](0) - \mathcal{E}_{\e,p}[u](T).
$$
By using \eqref{eq:derivatives_lucho}, we obtain exactly \eqref{eq:energy-dec}.
\end{proof}
We now recall the lower bound on the energy proved in \cite[Proposition 3.2]{FPS-DCDS}; define
\begin{equation}\label{eq:c_p}
	c_p:=\left(\frac{p}{p-1}\right)^{\frac{p-1}{p}}\int_{-1}^{+1} {F(s)}^\frac{p-1}{p}\,ds, \qquad p>1,
\end{equation}
and notice that for $p=2$ we obtain the constant defined in \eqref{eq:c_0}.
Moreover, define 
\begin{equation}\label{eq:mu}
	\mu_p:=\begin{cases}
		\displaystyle\sqrt2\left(\frac{\lambda_1}{p-1}\right)^{\frac{1}{p}}, \qquad & 1<p\leq2,\\
		\displaystyle\frac{\sqrt{2\lambda_1}\lambda_2^{\frac{2-p}{2p}}}{(p-1)^{{\frac{1}{p}}}}, &p>2,
	\end{cases}
\end{equation}
where $\lambda_1$ and $\lambda_2$ are the constants appearing in \eqref{ipoF2}.
\begin{prop}\label{prop:lower-p}
Let $p>1$ and assume that $F\in C^1(\R)$ satisfies \eqref{ipoF1}-\eqref{ipoF2} with $\theta=p$. 
Let $v$ be as in \eqref{vstruct}, and fix $A\in(0,r\mu_p)$, 
where $\mu_p$ is defined in \eqref{eq:mu}.
Then there exist $\e_0,C,\delta>0$ (depending only on $p,v$ and $A$) such that if $u\in H^1(a,b)$ satisfies \eqref{eq:u-v},
then for any $\e\in(0,\e_0)$,
\begin{equation}\label{eq:lower-p}
	\mathcal{E}_{\e,p}[u]\geq Nc_p-C\exp(-Ap/2\varepsilon),
\end{equation}
where $\mathcal{E}_{\e,p}$ and $c_p$ are defined in \eqref{eq:energy-p} and \eqref{eq:c_p}, respectively.
\end{prop}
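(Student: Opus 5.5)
We follow the Bronsard--Kohn/Grant localization strategy: the energy splits into disjoint pieces around each jump $h_i$ of $v$, each piece costs at least $c_p$ minus an exponentially small error, and the total is at least $N c_p$ minus the sum of these errors.

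\textbf{Step 1 (localization).} Fix $\hat r\in(0,r)$ with $A<\hat r\mu_p$, and a small $\rho>0$. Since $\|u-v\|_{L^1}\le\delta$ with $\delta$ small, for each $i$ we can find points $r_-\in(h_i-r,h_i-\hat r)$ and $r_+\in(h_i+\hat r,h_i+r)$ such that $|u(r_\pm)-v(r_\pm)|<\rho$. This works because the set where $|u-v|\ge\rho$ has measure at most $\delta/\rho$, which is smaller than $r-\hat r$ once $\delta$ is small. Similarly, inside $(h_i-\hat r,h_i+\hat r)$ there are points where $u$ is within $\rho$ of $-1$ and of $+1$. Without loss of generality, say $v=-1$ to the left of $h_i$.

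\textbf{Step 2 (the transition cost).} By Young's inequality,
\[
\frac{\e^{p-1}|u_x|^p}{p}+\frac{F(u)}{\e}\ \ge\ \Big(\tfrac{p}{p-1}\Big)^{\frac{p-1}{p}}F(u)^{\frac{p-1}{p}}|u_x|.
\]
Integrating between a point where $u\approx-1$ and a point where $u\approx1$ gives at least $c_p$ minus the cost deficit $\int_{\pm1\mp\rho}^{\pm1}$. This deficit is $O(\rho^{(\theta)(p-1)/p+1})$ by \eqref{eq:ass-F3}, and we must recover it exponentially.

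\textbf{Step 3 (the exponential correction, main obstacle).} On the intervals $(r_-,z)$ and $(z,r_+)$, where $z$ is the last point with $u$ near $-1$ (respectively the first point near $1$), we must show that the energy is at least $\int_{u(r_\pm)}^{\pm1\mp\rho}(\ldots)\,ds$ minus $C\exp(-Ap/2\e)$. Here I would follow \cite[Proposition 3.2]{FPS-DCDS}. Consider the minimization of the energy on an interval of length at least $r-\hat r$ with the endpoint value $u(r_\pm)$ prescribed and $|u\mp1|<\rho$ on it. The Euler--Lagrange equation is
\[
\e^p(|w'|^{p-2}w')'=F'(w),
\]
and with $\theta=p$, \eqref{ipoF2} gives $F'(w)\sim\lambda|w\mp1|^{p-2}(w\mp1)$, a homogeneous equation of the same degree. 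A comparison argument then shows that $|w\mp1|$ decays like $\exp(-\mu_p\,\mathrm{dist}/\e)$. This is where the two regimes $p\le2$ and $p>2$, with $\lambda_2^{(2-p)/2p}$, enter $\mu_p$. Evaluating the energy deficit at distance $\hat r$ yields a loss of order $|w\mp1|^{p}$-type quantities, hence $\exp(-p\mu_p\hat r/2\e)$-type bounds. I expect this to be the delicate step: the precise exponent $Ap/2$ requires carefully matching the homogeneity of $F$ with that of the $p$-energy. Concretely, I would bound the boundary value difference using the first integral
\[
\tfrac{p-1}{p}\e^p|w'|^p=F(w)-\text{const}.
\]

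\textbf{Step 4.} Sum over the $N$ disjoint intervals $(h_i-r,h_i+r)$. Since the integrand is nonnegative, the remaining parts of $(a,b)$ contribute nonnegatively, and we obtain
\[
\mathcal{E}_{\e,p}[u]\ge Nc_p-C\exp(-Ap/2\e).
\]
This holds for $\e<\e_0$, with $\delta,\rho$ chosen depending only on $p$, $v$ and $A$.
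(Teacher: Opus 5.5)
Your architecture matches the paper's. The paper also defers the localization, the Young-inequality bound and the minimization/comparison scheme to \cite[Proposition 3.2]{FPS-DCDS}. However, the paper's proof consists exactly of the step you leave as ``a comparison argument then shows'', since that is the only place where a general $F$ enters. Your statement of that step is wrong.

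The comparison argument does not give $|w\mp1|\lesssim e^{-\mu_p d/\e}$ at distance $d$. It gives this rate for $\psi:=(z-1)^2$, with $z$ the minimizer (your $w$), hence only $|z-1|\lesssim e^{-\mu_p d/(2\e)}$. Your rate is false in general. Take $F(u)=\frac{\lambda}{p}|1-u|^p$ for $|u-1|<\eta$. Then $\lambda_1=\lambda_2=\lambda$, and $\mu_p=\sqrt2\,(\lambda/(p-1))^{1/p}$ in both regimes. The Euler--Lagrange equation has the exact decaying solution $1-\rho e^{-kx/\e}$, which is the half-line minimizer with value $1-\rho$ at $x=0$, and its rate is $k=(\lambda/(p-1))^{1/p}=\mu_p/\sqrt2<\mu_p$. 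Your rate is also inconsistent with your own conclusion: $|w\mp1|\lesssim e^{-\mu_p\hat r/\e}$ would make the deficit $|w\mp1|^p$ of order $e^{-p\mu_p\hat r/\e}$. The $\tfrac12$ in $Ap/2$ comes exactly from $|1-z|^p=\psi^{p/2}$.

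What you need to supply is the inequality $\psi''\ge\mu_p^2\e^{-2}\psi$, which the paper obtains as follows.
\begin{itemize}
\item Drop $2(z')^2$ in $\psi''=2(z')^2+2(z-1)z''$.
\item From the Euler--Lagrange equation, write $z''=F'(z)|z'|^{2-p}/((p-1)\e^p)$.
\item As in \cite{FPS-DCDS}, use the first integral to replace $|z'|^{2-p}$ by $(\frac{p}{p-1})^{(2-p)/p}\e^{p-2}F(z)^{(2-p)/p}$.
\item By \eqref{ipoF2} with $\theta=p$, $(z-1)F'(z)\ge\lambda_1|z-1|^p$.
\item Bound $F(z)^{(2-p)/p}$ from below via \eqref{eq:ass-F3}. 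For $p\le2$ use the lower bound $\frac{\lambda_1}{p}|z-1|^p$. For $p>2$ the exponent $(2-p)/p$ is negative, so you must use the upper bound $\frac{\lambda_2}{p}|z-1|^p$.
\end{itemize}
This sign flip is why $\lambda_2$ enters $\mu_p$ only for $p>2$; you note this fact but give no mechanism.

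Finally, the exponential gain must act over a length $L\ge\hat r$ with $\hat r\mu_p>A$. Your Step 3 only claims an interval ``of length at least $r-\hat r$''. That would give $e^{-p\mu_p(r-\hat r)/(2\e)}$, not the target exponent. Instead, pick points with $|u\mp1|<\rho$ within distance $r-\hat r$ of $h_i$; this is possible once $\delta<\rho(r-\hat r)$. Then pose the minimization from those points out to $h_i\pm r$, so the decay interval has length greater than $\hat r$.
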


\begin{proof}
This result has been proved in \cite[Proposition 3.2]{FPS-DCDS} in the explicit case 
$$F(u)=\frac{1}{2p}|1-u^2|^p, \quad p>1.$$
Here, we only show how to extend the result to a general $F$ satisfying \eqref{ipoF1}-\eqref{ipoF2} with $\theta=p$. 
The explicit form for $F$ is used in the estimate of $\psi(x):=(z(x)-1)^2$, where $z$ is the minimizer defined in \cite[pag. 3226]{FPS-DCDS}.
Let us see how to proceed in the general case; one has $\psi'=2(z-1)z'$ and 
\begin{equation*}
	\psi''(x)\geq \frac{2}{\e^{2-p}}\left( \frac{p}{p-1}\right)^{{\frac{2-p}{p}}}(z(x)-1)\frac{F'(z(x))}{\e^p(p-1)} F(z(x))^{\frac{2-p}{p}},
\end{equation*}
see \cite[pag. 3226-3227]{FPS-DCDS}. 
We must consider two different cases: if $p\in(1,2]$, from \eqref{ipoF2}-\eqref{eq:ass-F3} with $\theta=p$ it follows that
\begin{equation*}
\begin{aligned}
	\psi''(x) & \geq \frac{2}{\e^2} \lambda_1^{\frac{2}{p}}(p-1)^{-{\frac{2}{p}}}(z(x)-1)^2 \\
	&\geq \frac{2}{\e^2} \frac{\lambda_1^{\frac{2}{p}}}{(p-1)^{\frac{2}{p}}} (z(x)-1)^2\\
	& = \frac{\mu_p^2}{\e^2}\psi(x), \qquad \qquad 1<p\leq2.
\end{aligned}
\end{equation*}
On the other hand, for $p>2$ \eqref{ipoF2}-\eqref{eq:ass-F3} and $\theta=p$ give
\begin{equation*}
	\psi''(x)\geq \frac{2}{\e^2}\lambda_1\lambda_2^{\frac{2-p}{p}}(p-1)^{-{\frac{2}{p}}}(z(x)-1)^2=\frac{\mu_p^2}{\e^2}\psi(x), \qquad \qquad p>2.
\end{equation*}
The latter computation is the only difference with \cite{FPS-DCDS} (it is the only place where the explicit form of $F$ is used in \cite{FPS-DCDS}),
and the proof can be concluded in the same way.
\end{proof}

Thanks to Proposition \ref{prop:lower-p}, we can proceed as in Section \ref{sec:exp_met} to prove that a transition layer structure is maintained for an exponentially long time.
Let us assume that the initial datum in \eqref{eq:initial} depends on $\e$, it satisfies \eqref{eq:ass-u0} and that
there exist constants $C>0$, $A\in(0,r\mu_p)$ 
(with $r$ satisfying \eqref{vstruct} and $\mu_p$ defined in \eqref{eq:mu}) such that
\begin{equation}\label{eq:energy-ini-p}
	\mathcal{E}_{\e,p}[u^\varepsilon]\leq Nc_p+C\exp(-Ap/2\e),
\end{equation}
for any $\varepsilon\ll1$, where the energy $\mathcal{E}_{\e,p}$ and the positive constant $c_ p$ are defined in \eqref{eq:energy-p} and in \eqref{eq:c_p}, respectively.
Then, we have the following result.

\begin{thm}[exponentially slow motion with a $p$-Laplacian diffusion when $\theta=p$]\label{thm:main-p}
Assume that $F\in C^1(\R)$ satisfies \eqref{ipoF1}-\eqref{ipoF2} with $\theta=p>1$. 
Let $v$ be as in \eqref{vstruct} and $A\in(0,r\mu_p)$, with $\mu_p>0$ (independent on $\e$) as in \eqref{eq:mu}.
If $u^\varepsilon$ is the solution of \eqref{eq:p-lap}-\eqref{eq:Neu}-\eqref{eq:initial}
with initial datum $u_0^{\varepsilon}$ satisfying \eqref{eq:ass-u0} and \eqref{eq:energy-ini-p}, then
\begin{equation}\label{eq:limit-p}
	\sup_{0\leq t\leq m\exp\left(
		\frac{Ap}{2\alpha \e}
		\right)}\|u^\varepsilon(\cdot,t)-v\|_{{}_{L^1}}\xrightarrow[\varepsilon\rightarrow0]{}0,
\end{equation}
for any $m>0$.
\end{thm}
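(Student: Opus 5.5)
The plan is to repeat the two-step argument of Section \ref{sec:exp_met}, using Lemma \ref{lem:energy-p} in place of Proposition \ref{lem:energy}, Proposition \ref{prop:lower-p} in place of Proposition \ref{prop:lower}, and the constant $c_p$ in place of $c_2$. The exponent $A$ is replaced throughout by $Ap/2$.

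\textbf{Step 1: an $L^2$ bound for the fractional derivative.} I will prove the analogue of Proposition \ref{prop:L2-norm}:
\begin{equation*}
\int_0^{T_{\e,\alpha}}\!\!\int_a^b\left[\prescript{C}{}\partial_t^{(1+\alpha)/2}u\right]^2dxdt\leq\frac{C\e}{\sin(\alpha\pi/2)}\exp\left(-\frac{Ap}{2\e}\right),
\end{equation*}
with
\begin{equation*}
T_{\e,\alpha}=\left[C_1\frac{\alpha}{\e}\sin\left(\frac{\alpha\pi}{2}\right)\right]^{1/\alpha}\exp\left(\frac{Ap}{2\alpha\e}\right).
\end{equation*}

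Choose $\e_0$ so that $\|u_0^\e-v\|_{L^1}\le\delta/2$, where $\delta$ is given by Proposition \ref{prop:lower-p}. Take any $\hat T$ for which the weighted integral in \eqref{claim1} is at most $\delta/2$. The representation
\begin{equation*}
u(x,t)-u(x,0)=\frac{1}{\Gamma\left(\frac{1+\alpha}{2}\right)}\int_0^t\prescript{C}{}\partial_s^{(1+\alpha)/2}u(x,s)\,(t-s)^{(\alpha-1)/2}\,ds
\end{equation*}
then gives $\|u(\cdot,\hat T)-v\|_{L^1}\le\delta$. Proposition \ref{prop:lower-p} therefore yields $\mathcal{E}_{\e,p}[u](\hat T)\ge Nc_p-C\exp(-Ap/2\e)$.

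Combining this with \eqref{eq:energy-ini-p} and Lemma \ref{lem:energy-p} bounds the $L^2$ norm of $\prescript{C}{}\partial_t^{(1+\alpha)/2}u$ on $(0,\hat T)$ by $C\e\exp(-Ap/2\e)/\sin(\alpha\pi/2)$.

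Next, suppose $T_*$ is the first time at which equality $=\delta/2$ holds in \eqref{claim1}. Apply Hölder's inequality, using
\begin{equation*}
\int_0^{T_*}(T_*-t)^{\alpha-1}\,dt=\frac{T_*^\alpha}{\alpha}.
\end{equation*}
This forces $T_*\ge T_{\e,\alpha}$. If no such $T_*$ exists, the bound holds for every $\hat T$. In both cases the estimate holds on $[0,T_{\e,\alpha}]$.

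\textbf{Step 2: the $L^1$ estimate.} Fix $m>0$ and take $\e$ small enough that $m\exp(Ap/2\alpha\e)\le T_{\e,\alpha}$. This only requires $m<[C_1\alpha\sin(\alpha\pi/2)]^{1/\alpha}\e^{-1/\alpha}$, which holds for small $\e$.

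For $t\le m\exp(Ap/2\alpha\e)$, the representation formula and Hölder's inequality give
\begin{equation*}
\|u^\e(\cdot,t)-u_0^\e\|_{L^1}\le\frac{1}{\Gamma\left(\frac{1+\alpha}{2}\right)}\left[\frac{C\e}{\sin(\alpha\pi/2)}e^{-Ap/2\e}\right]^{1/2}\left[\frac{(b-a)\,m^\alpha e^{Ap/2\e}}{\alpha}\right]^{1/2}\le C_\alpha\sqrt\e .
\end{equation*}
The exponentials cancel exactly, so this tends to $0$. Combined with \eqref{eq:ass-u0} through the triangle inequality \eqref{trianglebar}, this gives \eqref{eq:limit-p}.

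\textbf{Main obstacle.} The main point to check is the regularity needed for Lemma \ref{lem:energy-p} and for the Caputo representation formula. These require $u_t(x,\cdot)\in L^2(0,T)$ for almost every $x$, and enough smoothness to integrate the $p$-Laplacian term by parts. When $p\neq2$ I will assume a sufficiently regular solution, as in Lemma \ref{lem:energy-p}.

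Beyond that, the only place where $p$ enters is the remainder $\exp(-Ap/2\e)$ from Proposition \ref{prop:lower-p}. The bookkeeping just replaces $A$ by $Ap/2$, so the time scale becomes $\exp(Ap/2\alpha\e)$.
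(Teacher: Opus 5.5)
Your proposal is correct and follows essentially the same route as the paper. You first prove the analogue of Proposition~\ref{prop:L2-norm} via Lemma~\ref{lem:energy-p}, Proposition~\ref{prop:lower-p} and the H\"older argument with $A$ replaced by $Ap/2$, then conclude with the same $L^1$ estimate as in the proof of Theorem~\ref{thm:main}; taking $T_*$ as the first time the weighted integral reaches $\delta/2$ and keeping the factor $1/\alpha$ in the final H\"older bound are only small points of extra care.
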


\begin{proof}
By proceeding as in the proof of Proposition \ref{prop:L2-norm} and using \eqref{eq:lower-p} instead of \eqref{eq:lower}, 
we can prove that there exist positive constants $\varepsilon_0$, $C$, $C_1>0$ (independent on $\varepsilon$ and $\alpha$) such that
\begin{equation*}
	\int_0^{T_{\e,\alpha,p}}\!\!\int_a^b \left[
		\prescript{C}{}\partial_t^{(1+\alpha)/2} u(x,t)
	\right]^2 dx dt
	\leq \frac{C\e}{\sin(\alpha\pi/2)}\exp\left(-\frac{Ap}{2\varepsilon}\right),
	\end{equation*}
where 
\begin{equation*}
	{T_{\e,\alpha,p}}:=\left[C_1
	\frac{ \alpha }{\e}\sin\left(\frac{\alpha\pi}{2}\right)
	\right]^{1/\alpha}
	\exp\left(\frac{Ap}{2\alpha\e}\right),
	\end{equation*}
for all $\varepsilon\in(0,\varepsilon_0)$.
Indeed, reasoning as we did for \eqref{eq:bound_on_L2_norm_Caputo_derivative} and applying \eqref{eq:energy-dec}-\eqref{eq:lower-p}-\eqref{eq:energy-ini-p}, 
one can prove that there exists $\hat T>0$ such that
\begin{equation}\label{eq:T-plap}
	\int_0^{\hat{T}}\!\!\int_a^b\left[\prescript{C}{}\partial_t^{{(1+\alpha)}/{2}}u(x,t)\right]^2\,dxdt
	\leq \frac{C\e}{\sin(\alpha\pi/2)}\exp\left(-\frac{Ap}{2\e}\right).
\end{equation}
It only remains to prove that \eqref{eq:T-plap} holds for $\hat T\geq T_{\e,\alpha,p}$.
This can be done by using \eqref{claim1}, H\"older's inequality and \eqref{eq:T-plap} to get
\begin{align*}
	\frac{1}{2}\delta &\leq
	\frac{1}{\Gamma\left(\frac{1+\alpha}{2}\right)}
	\left[\int_0^{\hat T}\!\!\int_a^b
	\left(
	\prescript{C}{}\partial_t^{(1+\alpha)/2} u(x,t)   \right)^2
	dxdt
	\right]^{1/2}
	\left[\int_0^{\hat T}\!\!\int_a^b
	(\hat T -t)^{(\alpha-1)}
		dxdt
	\right]^{1/2}
	\\
	&\leq 
	\frac{1}{\Gamma\left(\frac{1+\alpha}{2}\right)}
	\left[\frac{C\e}{\sin(\alpha\pi/2)}\exp\left(-\frac{Ap}{2\e}\right)
	\frac{ (b-a)\hat T^\alpha}{\alpha}\right]^{1/2}.
\end{align*}
As a consequence, 
\begin{align*}
	\hat T
	\geq 
	\left[C_1
	\frac{ \alpha }{\e}\sin\left(\frac{\alpha\pi}{2}\right)
	\right]^{1/\alpha}
	\exp\left(\frac{Ap}{2\alpha\e}\right)
	=T_{\e,\alpha,p},
\end{align*}
for some $C_1>0$, independent on $\alpha$ and $\e$. 

Therefore, we can conclude the proof exactly as done for Theorem \ref{thm:main}.
\end{proof}

The argument developed in Section \ref{sec:alg-lin} can be adapted to the degenerate case $\theta>p$. 
In this case, the lower bound for the energy is algebraic rather than exponentially small, as in the classical setting considered in \cite{FPS-DCDS}. 
Combining this estimate with the fractional energy dissipation inequality gives the corresponding algebraic time scale, with the exponent divided by $\alpha$.

Let us define the sequence
\begin{equation}\label{eq:exp_alg_p}
	\varkappa_j:=\sum_{m=1}^j\gamma_p^m,
	\qquad\qquad  \mbox{ where }  \quad \gamma_p:=\displaystyle\frac{p-1}{p}+\frac{1}{\theta},
\end{equation}
\begin{thm}[algebraic slow motion with a $p$-Laplacian diffusion when $\theta>p$]\label{thm:alg-p}
Assume that $F\in C^1(\R)$ satisfies \eqref{ipoF1}-\eqref{ipoF2} with $\theta>p>1$.
Moreover, let $v$ be as in \eqref{vstruct} and let $\left\{\varkappa_j\right\}_{j\in\mathbb N}$ be as in \eqref{eq:exp_alg_p}. 
If $u^\varepsilon$ is the solution of \eqref{eq:p-lap}-\eqref{eq:Neu}-\eqref{eq:initial}
with initial datum $u_0^{\varepsilon}$ satisfying \eqref{eq:ass-u0} and 
\begin{equation*}
	\mathcal E_{\varepsilon,p}[u_0^\varepsilon]\leq Nc_p+\e^{\varkappa_j},
\end{equation*}
for any $\varepsilon\ll1$, then
\begin{equation*}
	\sup_{0\leq t\leq l{\e^{-\varkappa_j/\alpha}}}\|u^\varepsilon(\cdot,t)-v\|_{{}_{L^1}}\xrightarrow[\varepsilon\rightarrow0]{}0,
\end{equation*}
 for any $l>0$.
\end{thm}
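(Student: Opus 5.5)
The plan is to run the argument of Theorem \ref{thm:main2} (that is, Proposition \ref{prop:L2-norm_algebraic} followed by the proof scheme of Theorem \ref{thm:main}), with three substitutions: the energy $\mathcal E_{\e,p}$ replaces $E_\e$, the dissipation inequality \eqref{eq:energy-dec} of Lemma \ref{lem:energy-p} replaces \eqref{eq:energyestimate}, and an algebraic lower bound replaces Proposition \ref{prop:lower_deg}.

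\textbf{Step 1: the lower bound.} The needed bound is the $p$-Laplacian analogue of Proposition \ref{prop:lower_deg}, namely \cite[Proposition 4.1]{FPS-DCDS}. Suppose $\|u-v\|_{L^1}\le\delta_j$ and $\mathcal E_{\e,p}[u]\le Nc_p+C\e^{\varkappa_j}$. Then
\[
\mathcal E_{\e,p}[u]\ge Nc_p-C_j\e^{\varkappa_{j+1}}\ge Nc_p-C_j\e^{\varkappa_j}.
\]
That result is stated for the explicit potential $F(u)=\frac{1}{2p}|1-u^2|^\theta$. To extend it to a general $F$ satisfying \eqref{ipoF1}-\eqref{ipoF2} with $\theta>p$, I would proceed as in Proposition \ref{prop:lower-p}. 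The explicit form of $F$ only enters through its behaviour near $\pm1$, and \eqref{ipoF2}-\eqref{eq:ass-F3} give two-sided bounds $|1\pm u|^\theta$ up to the constants $\lambda_1,\lambda_2$. I expect this bookkeeping to be the main obstacle: one has to check that the iterative improvement $\varkappa_j\mapsto\varkappa_{j+1}$ in \cite{FPS-DCDS} uses only these power-type bounds. I would first locate the estimates of the distance of the minimizer from $\pm1$ on intervals of length $\sim\e^{\gamma_p}$, and then verify that only the exponent $\theta$ matters, with the constants absorbed into $C_j$.

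\textbf{Step 2: the $L^2$ bound on the fractional derivative.} Choose $\e_0$ so small that $\|u_0^\e-v\|_{L^1}\le\delta_j/2$. Take $\hat T$ satisfying condition \eqref{claim1} with $\delta_j$ in place of $\delta$. By the representation $u(\cdot,\hat T)-u_0=I^{(1+\alpha)/2}\,{}^C\partial^{(1+\alpha)/2}u$, this gives $\|u(\cdot,\hat T)-v\|_{L^1}\le\delta_j$. Moreover \eqref{eq:energy-dec} and the hypothesis on the initial energy give $\mathcal E_{\e,p}[u](\hat T)\le Nc_p+\e^{\varkappa_j}$. Step 1 then applies, and combining it with \eqref{eq:energy-dec} yields
\[
\int_0^{\hat T}\!\!\int_a^b\left[{}^C\partial_t^{(1+\alpha)/2}u\right]^2\le \frac{C\e^{\varkappa_j+1}}{\sin(\alpha\pi/2)}.
\]

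\textbf{Step 3: the time scale.} Let $T_*$ be the first time at which \eqref{claim1} becomes an equality. Applying H\"older's inequality exactly as in Proposition \ref{prop:L2-norm} gives
\[
\delta_j/2\le \Gamma\!\left(\tfrac{1+\alpha}{2}\right)^{-1}\left[C\e^{\varkappa_j+1}(b-a)T_*^\alpha/(\alpha\sin(\alpha\pi/2))\right]^{1/2},
\]
hence $T_*\ge[C_j\alpha\sin(\alpha\pi/2)]^{1/\alpha}\e^{-(\varkappa_j+1)/\alpha}$. For any $l>0$ and $\e$ small, this quantity exceeds $l\e^{-\varkappa_j/\alpha}$. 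Therefore, for every $t\le l\e^{-\varkappa_j/\alpha}$, the same H\"older estimate gives
\[
\|u(\cdot,t)-u_0^\e\|_{L^1}\le C_\alpha\left(\e^{\varkappa_j+1}\,l^\alpha\e^{-\varkappa_j}\right)^{1/2}=C_{\alpha,l}\sqrt{\e}.
\]
This tends to $0$, and combined with \eqref{eq:ass-u0} through the triangle inequality it concludes the proof.
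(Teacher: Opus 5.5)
Your proposal is correct and follows the route the paper indicates: the paper gives no detailed proof, only stating that the argument of Section~\ref{sec:alg-lin} (Proposition~\ref{prop:L2-norm_algebraic} plus the proof scheme of Theorem~\ref{thm:main}) carries over once the algebraic lower bound of \cite[Proposition 4.1]{FPS-DCDS} is combined with the dissipation inequality \eqref{eq:energy-dec}. Your write-up is slightly more careful than that sketch, because you explicitly use the monotonicity of $\mathcal E_{\e,p}$ to check the upper-energy hypothesis of the lower bound at time $\hat T$, and you take $T_*$ to be the first time at which \eqref{claim1} becomes an equality.
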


\begin{rem}
Notice that Theorems \ref{thm:main} and \ref{thm:main2} can be recovered from Theorems \ref{thm:main-p} and \ref{thm:alg-p}, respectively.
In particular, the energy \eqref{eq:energy-p} coincides with \eqref{eq:energy} when $p=2$; 
moreover, the constants $\mu_p$, defined in \eqref{eq:mu}, and $\gamma_p$ in \eqref{eq:exp_alg_p} become $\sqrt{2\lambda_1}$ and \eqref{eq:exp_alg}, 
respectively, when $p=2$.
\end{rem}

\subsection{Perona--Malik type diffusion}
We finally consider a class of nonlinear diffusion operators of Perona--Malik type. 
Let $Q\in C^1(\R)$ satisfy \eqref{eq:Q-ass1}-\eqref{eq:Q-ass2}, and consider the problem \eqref{eq:Q-dif},
complemented with the homogeneous Neumann boundary conditions \eqref{eq:Neu} and the initial condition \eqref{eq:initial}. 
The corresponding energy is given by
\begin{equation}\label{eq:energy-Q}
	\mathscr{E}_\e[u]=\int_a^b\left(\frac{1}{\e^3}\tilde{Q}(\e^2u_x)+\frac{F(u)}{\e}\right)\,dx, \qquad \mbox{ where } \quad \tilde Q(s):=\int_0^sQ(\tau)\,d\tau.
\end{equation}

The slow motion of layered solutions for the corresponding problem with classical time derivative has been studied in \cite{DLFS}. 
In particular, in the non-degenerate case $\theta=2$, the energy of a function sufficiently close to a layered profile satisfies the following lower bound \eqref{eq:lower-Q}.
Let us first introduce the energy of a single transition layer; set
\begin{equation*}
	P_\e(s):= \int_{0}^{s}\e^2z\,Q'(\e^2 z)\,dz,
\end{equation*}
and denote by $J_\e$ the function which inverts the equation $P_\e(s)=\xi$ in $[0,\kappa \e^{-2}]$,
for any $s\in[0,\kappa \e^{-2}]$ and $\xi\in[0,\ell\e^{-2}]$, for details see \cite[Lemma 2.1]{DLFS}.
Then, the energy of a single transition layer is given by
\begin{equation}\label{eq:c_eps}
	c_\e:=\e^{-1}\int_{-1}^{+1}Q\left(\e^2 J_\e(F(s))\right)\,ds.
\end{equation}
\begin{prop}[\cite{DLFS}]\label{prop:lower-Q}
Assume that $Q\in C^1(\R)$ satisfies \eqref{eq:Q-ass1}-\eqref{eq:Q-ass2} and that $F\in C^1(\R)$ satisfies \eqref{ipoF1}-\eqref{ipoF2} with $\theta=2$. 
Let us set 
\begin{equation}\label{eq:mathc-Q}
	\mathcal{Q}:=\max_{s\in[-\kappa,\kappa]}Q'(s),
\end{equation}
where $\kappa$ is given in \eqref{eq:Q-ass2}.
Moreover, let $v$ be as in \eqref{vstruct} and $A\in(0,r\sqrt{2\lambda_1 \mathcal{Q}^{-1}})$ with
$\lambda_1>0$ (independent on $\e$) as in \eqref{ipoF2}.
Then, there exist $\e_0,C,\delta>0$ (depending only on $Q,F,v$ and $A$) such that if $u\in H^1(a,b)$ satisfies \eqref{eq:u-v},
then for any $\e\in(0,\e_0)$,
\begin{equation}\label{eq:lower-Q}
	\mathscr E_\varepsilon[u]\geq Nc_\e-C\exp(-A/\varepsilon),
\end{equation}
where $\mathscr E_\e$ and $c_\e$ are defined in \eqref{eq:energy-Q} and \eqref{eq:c_eps}, respectively. 
\end{prop}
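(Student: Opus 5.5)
The lower bound \eqref{eq:lower-Q} is the Perona--Malik analogue of Proposition \ref{prop:lower}, and I would prove it following Grant's argument as adapted in \cite[Proposition 3.4]{DLFS}. Here I only need to check that the general hypotheses \eqref{ipoF1}-\eqref{ipoF2} with $\theta=2$ on $F$ are enough.

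\emph{Localisation.} Since $\|u-v\|_{L^1}\le\delta$ with $\delta$ small, in each interval $(h_i-r,h_i+r)$ there are points $r_i<h_i<s_i$, close to $h_i-r$ and $h_i+r$ respectively, where $u$ is close to $v(r_i)$ and $v(s_i)$, say within $\rho\ll\eta$. This follows from a Chebyshev-type argument on the $L^1$ distance, exactly as in \cite{Grant}. Because $\tilde Q\ge0$ and $F\ge0$, the energy on $(a,b)$ dominates the sum of the energies on the disjoint intervals $(h_i-r,h_i+r)$. It therefore suffices to show that each such interval carries energy at least $c_\e-C\exp(-A/\e)$.

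\emph{Energy on one interval.} On a single interval I would use the algebraic inequality behind \eqref{eq:c_eps}: pointwise, $\e^{-3}\tilde Q(\e^2u_x)+\e^{-1}F(u)\ge \e^{-1}Q(\e^2J_\e(F(u)))|u_x|$. This is the Perona--Malik replacement of $\frac{\e}{2}u_x^2+\frac{F}{\e}\ge\sqrt{2F}|u_x|$, proved in \cite[Lemma 2.1]{DLFS} via the Legendre-type structure of $P_\e$. Integrating this bound between the points where $u$ passes from near $-1$ to near $+1$ gives $c_\e$ minus the two tails $\e^{-1}\int_{1-\rho}^{1}Q(\e^2J_\e(F(s)))\,ds$ (and similarly near $-1$). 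These tails are only $O(\rho^2/\e)$, which is not exponentially small. To handle them, I would follow Grant: on the subinterval near the endpoint where $|u\mp1|<\rho$, minimise the energy with the value of $u$ fixed at one end and free at the boundary. The minimiser $z$ solves the Euler--Lagrange equation $\e^2 Q'(\e^2z')z''=F'(z)/\e^2\cdot\e^2$, suitably normalised.

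\emph{Main step: exponential decay.} The key step, and the only place where the form of $F$ enters, is to show that $\psi=(z\mp1)^2$ satisfies $\psi''\ge (\mu^2/\e^2)\psi$ with $\mu^2=2\lambda_1\mathcal Q^{-1}$. To get this I would use three facts:
\begin{itemize}
\item $\psi''\ge 2(z\mp1)z''$;
\item $z''=F'(z)/(\e^2Q'(\e^2z'))$ together with $0<Q'(\e^2z')\le\mathcal Q$, which holds because the minimiser stays in the parabolic region $|\e^2z'|<\kappa$, as in \cite{DLFS};
\item \eqref{ipoF2} with $\theta=2$, giving $(z\mp1)F'(z)\ge\lambda_1(z\mp1)^2$.
\end{itemize}
A maximum-principle comparison on an interval of length about $r$ then yields $\psi\le C\rho^2\exp(-\mu r/\e)$ at the far end. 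Consequently the boundary defect, and hence the energy deficit, is at most $C\exp(-A/\e)$ for any $A<r\mu$.

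Summing over $i=1,\dots,N$ gives \eqref{eq:lower-Q}. I expect the main obstacle to be confirming that the minimiser stays in the region where $Q'>0$, so that the bound by $\mathcal Q$ is valid; I would take this from \cite{DLFS} rather than reprove it.
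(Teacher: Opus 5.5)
\textbf{Gap: the pointwise inequality fails for Perona--Malik $Q$.} The paper gives no argument of its own here and simply imports the bound from \cite{DLFS}. Your outline is the Grant-type scheme behind that result: localisation, an integrated Young-type inequality across each layer, Grant's free-end minimisation on the tails, and the comparison $\psi''\ge\mu^2\e^{-2}\psi$ with $\mu^2=2\lambda_1\mathcal Q^{-1}$. Your derivation of $\mu$ from $z''=F'(z)/(\e^2Q'(\e^2z'))$, $0<Q'\le\mathcal Q$ and \eqref{ipoF2} is correct.

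The step that fails is the inequality $\e^{-3}\tilde Q(\e^2u_x)+\e^{-1}F(u)\ge\e^{-1}Q(\e^2J_\e(F(u)))|u_x|$, which you use for all values of $u_x$. Set $w=J_\e(F(u))$, so that $F(u)=P_\e(w)=wQ(\e^2w)-\e^{-2}\tilde Q(\e^2w)$. Then the difference $g(s)=\e^{-2}\tilde Q(\e^2s)+P_\e(w)-Q(\e^2w)s$ satisfies $g(w)=0$ and $g'(s)=Q(\e^2s)-Q(\e^2w)$. It is therefore nonnegative for $0\le s\le\kappa\e^{-2}$. However, since $Q(s)\to0$ we have $\tilde Q(s)/s\to0$, so $g(s)\to-\infty$ as $s\to\infty$ whenever $F(u)>0$.

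This sublinear growth of $\tilde Q$ is exactly what the Allen--Cahn template ignores, and it cannot be repaired inside the proof. Take $u$ equal to $v$ outside $N$ intervals of length $\ell$ centred at the $h_i$ and linear on them. Then $\|u-v\|_{L^1}=N\ell/2$ and
\[
\mathscr E_\e[u]=N\ell\Bigl[\e^{-3}\tilde Q(2\e^2/\ell)+\frac{1}{2\e}\int_{-1}^1F(s)\,ds\Bigr]\longrightarrow0\qquad(\ell\to0),
\]
so \eqref{eq:lower-Q} fails for such $u$ even though \eqref{eq:u-v} holds. The same mechanism breaks your tail step. Without a bound on $z'$, the infimum of the energy on $(s_i,h_i+r)$ with $z(s_i)$ prescribed is $0$ (a steep ramp up to $1$, then constant). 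This infimum is not attained, so there is no minimiser, no Euler--Lagrange equation, and no recovery of the defect.

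\textbf{What is needed.} You need a hypothesis keeping the gradients in the parabolic region, e.g.\ $\e^2\|u_x\|_{L^\infty}\le\kappa$. For arbitrary $u\in H^1(a,b)$ the statement is false by the example above, so this must be added to the hypotheses, and it must be used twice:
\begin{itemize}
\item it gives $g\ge0$, hence your integrated bound across the layer;
\item the tail problem must be posed in the class $\{\e^2|z'|\le\kappa\}$. There $\tilde Q$ is convex, so a minimiser exists, and you then show the constraint is inactive near $\pm1$ to obtain the Euler--Lagrange equation with $0<Q'(\e^2z')\le\mathcal Q$.
\end{itemize}
The obstacle you flagged at the end (the minimiser staying where $Q'>0$) is only half of this; the same restriction is needed for $u$ itself.

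\textbf{Localisation.} Separately, your points $r_i,s_i$ are misplaced. The comparison runs on $(h_i-r,r_i)$ and $(s_i,h_i+r)$, so $r_i,s_i$ must lie within distance $r-\hat r$ of $h_i$, not near $h_i\mp r$, with $\hat r\mu>A$. This is possible because $v$ is constant there and $\|u-v\|_{L^1}\le\delta$. It makes those intervals longer than $\hat r$, which is what produces the factor $\exp(-A/\e)$. With your placement the tails are short and the decay estimate gives nothing.
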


This estimate is the analogue of Propositions \ref{prop:lower} and \ref{prop:lower-p} used in Sections \ref{sec:exp_met} and \ref{sec:p-lap}, respectively. 
In particular, Proposition \ref{prop:lower-Q} is a variational result concerning the energy \eqref{eq:energy-Q}, 
and the exponentially small remainder does not depend on the order $\alpha$ of the fractional derivative.

Combining this lower bound with the fractional energy dissipation estimate, we obtain the persistence of the layered structure on an exponentially long time scale. 

\begin{thm}[exponentially slow motion for Perona--Malik type diffusion]\label{thm:Q-main}
Assume that $Q\in C^1(\R)$ satisfies \eqref{eq:Q-ass1}-\eqref{eq:Q-ass2} and that $F\in C^1(\R)$ satisfies \eqref{ipoF1}-\eqref{ipoF2} with $\theta=2$. 
Let $v$ be as in \eqref{vstruct} and let $A\in(0,r\sqrt{2\lambda_1\mathcal{Q}^{-1}})$, with $\mathcal{Q}$ defined in \eqref{eq:mathc-Q}. 
If $u^\e$ is a sufficiently regular solution of \eqref{eq:Q-dif}-\eqref{eq:Neu}-\eqref{eq:initial} with initial datum satisfying \eqref{eq:ass-u0} and
\begin{equation*}
	\mathscr{E}_{\e}[u^\varepsilon]\leq Nc_\e+C\exp(-A/\e),
\end{equation*}
for any $\varepsilon\ll1$, where the energy $\mathscr E_{\e}$ and the positive constant $c_\e$ 
are defined in \eqref{eq:energy-Q} and \eqref{eq:c_eps}, respectively, then
\begin{equation}\label{eq:Q-limit}
	\sup_{0\leq t\leq m\exp(A/(\alpha\e))} \|u^\e(\cdot,t)-v\|_{L^1}\xrightarrow[\e\to0]{}0,
\end{equation}
for any $m>0$.
\end{thm}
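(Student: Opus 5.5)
The proof follows the scheme of Proposition \ref{prop:L2-norm} and Theorem \ref{thm:main}, with $E_\e$ replaced by $\mathscr E_\e$ and Proposition \ref{prop:lower} replaced by Proposition \ref{prop:lower-Q}.

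\emph{Step 1: energy dissipation.} First I would prove the analogue of Lemma \ref{lem:energy-p} for \eqref{eq:Q-dif}. Multiply \eqref{eq:Q-dif} by $u_t$, integrate over $(a,b)$, and integrate by parts using \eqref{eq:Neu}; since $Q(\e^2u_x)=0$ at $x=a,b$ by \eqref{eq:Q-ass1}, this gives
\[
\int_a^b\left[Q(\e^2u_x)_x-F'(u)\right]u_t\,dx=-\int_a^b\left[Q(\e^2u_x)u_{xt}+F'(u)u_t\right]dx=-\e\frac{d}{dt}\mathscr E_\e[u](t),
\]
because $\frac{d}{dt}\e^{-3}\tilde Q(\e^2u_x)=\e^{-1}Q(\e^2u_x)u_{xt}$. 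Integrating in time and applying \eqref{eq:derivatives_lucho} gives
\[
\mathscr E_\e[u](0)-\mathscr E_\e[u](T)\geq\e^{-1}\sin(\alpha\pi/2)\int_0^T\!\!\int_a^b\left[\prescript{C}{}\partial_t^{(1+\alpha)/2}u\right]^2dxdt\geq 0.
\]
In particular $\mathscr E_\e[u](t)\leq\mathscr E_\e[u](0)$. This step uses only the regularity assumed on $u^\e$, since the Fourier argument in \eqref{eq:derivatives_lucho} concerns only the time variable.

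\emph{Step 2: $L^2$ bound on the fractional derivative.} Take $\e_0$ small enough that $\|u_0^\e-v\|_{L^1}\leq\delta/2$, where $\delta$ comes from Proposition \ref{prop:lower-Q}. Suppose $\hat T$ satisfies \eqref{claim1}. The representation $u(x,t)-u(x,0)=\Gamma(\frac{1+\alpha}2)^{-1}\int_0^t\prescript{C}{}\partial_s^{(1+\alpha)/2}u\,(t-s)^{(\alpha-1)/2}ds$ then gives $\|u(\cdot,\hat T)-v\|_{L^1}\leq\delta$, so \eqref{eq:lower-Q} yields $\mathscr E_\e[u](\hat T)\geq Nc_\e-Ce^{-A/\e}$. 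Combining this with the initial bound $\mathscr E_\e[u_0^\e]\leq Nc_\e+Ce^{-A/\e}$ and Step 1 gives
\[
\int_0^{\hat T}\!\!\int_a^b[\prescript{C}{}\partial_t^{(1+\alpha)/2}u]^2\leq\frac{C\e}{\sin(\alpha\pi/2)}e^{-A/\e}.
\]
To see how long \eqref{claim1} can hold, let $T_*$ be the first time at which equality holds in \eqref{claim1}. H\"older's inequality gives
\[
\frac{\delta}{2}\leq\Gamma\left(\tfrac{1+\alpha}2\right)^{-1}\left[\frac{C\e}{\sin(\alpha\pi/2)}e^{-A/\e}\,\frac{(b-a)T_*^\alpha}{\alpha}\right]^{1/2},
\]
hence $T_*\geq[C_1\alpha\e^{-1}\sin(\alpha\pi/2)]^{1/\alpha}e^{A/(\alpha\e)}$. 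If no such $T_*$ exists, \eqref{claim1} holds for every $\hat T$ and there is nothing to prove.

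\emph{Step 3: conclusion.} Fix $m>0$. For $\e$ small we have $m<[C_1\alpha\sin(\alpha\pi/2)]^{1/\alpha}\e^{-1/\alpha}$, so the interval $[0,me^{A/(\alpha\e)}]$ lies inside the range covered by Step 2. For $t$ in this interval, the same H\"older estimate as in the proof of Theorem \ref{thm:main} gives
\[
\|u(\cdot,t)-u_0^\e\|_{L^1}\leq C_\alpha\sqrt{\e}\,e^{-A/(2\e)}\,m^{\alpha/2}e^{A/(2\e)}\leq C_\alpha\sqrt\e .
\]
By the triangle inequality and \eqref{eq:ass-u0}, this proves \eqref{eq:Q-limit}.

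\emph{Main obstacle.} The only genuinely new point is Step 1: checking that the Perona--Malik energy is dissipated exactly as $-\e^{-1}\int\prescript{C}{}\partial^\alpha_t u\,u_t$. The boundary term needs care, and the argument only works for sufficiently regular solutions, since the backward-parabolic regime $|\e^2u_x|>\kappa$ may cause well-posedness problems; this is why the statement assumes a sufficiently regular solution. Everything else transfers verbatim, because Proposition \ref{prop:lower-Q} is purely variational and independent of $\alpha$.
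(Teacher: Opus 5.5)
Your proposal is correct and follows the paper's proof essentially step for step. You obtain the fractional dissipation inequality \eqref{eq:energyestimate-Q} by multiplying by $u_t$ (using $Q(0)=0$ to kill the boundary term) and invoking \eqref{eq:derivatives_lucho}, then get the $L^2$ bound on $\prescript{C}{}\partial_t^{(1+\alpha)/2}u$ up to $\bar T_{\e,\alpha}$ via Proposition \ref{prop:lower-Q} and H\"older's inequality, and conclude exactly as in Theorem \ref{thm:main}; you are in fact slightly more explicit than the paper about the boundary term and about taking $T_*$ to be the first time of equality in \eqref{claim1}.
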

\begin{proof}
The proof follows the same strategy as that of Theorems \ref{thm:main}-\ref{thm:main-p}. 
Indeed, the lower bound \eqref{eq:lower-Q} provides the exponentially small energy gap, 
while the fractional dissipation inequality yields the corresponding control of the time variation of the solution. 

First, multiplying \eqref{eq:Q-dif} by $u_t$ and proceeding as in the proof of \eqref{eq:energyestimate} and \eqref{eq:energy-dec}, we obtain the fractional dissipation inequality
\begin{equation}\label{eq:energyestimate-Q}
	\mathscr E_\e[u](0)-\mathscr E_\e[u](T)\geq \e^{-1}\sin\left(\frac{\alpha\pi}{2}\right)\int_0^T\!\int_a^b\left[\prescript{C}{}\partial_t^{(1+\alpha)/2}u(x,t)\right]^2\,dxdt\geq0.
\end{equation}
Second, we can prove that there exist positive constants $\varepsilon_0, C_1, C_2>0$ (independent on $\varepsilon$) such that
\begin{equation*}
	\int_0^{\bar T_{\e,\alpha}}\!\!\int_a^b \left[
		\prescript{C}{}\partial_t^{(1+\alpha)/2} u(x,t)
	\right]^2 dx dt
	\leq \frac{C_1\e}{\sin(\alpha\pi/2)}\exp\left(-\frac{A}{\varepsilon}\right),
	\end{equation*}
where 
\begin{equation*}
	\bar T_{\e,\alpha}:=\left[C_2
	\frac{ \alpha }{\e}\sin\left(\frac{\alpha\pi}{2}\right)
	\right]^{1/\alpha}
	\exp\left(\frac{A}{\alpha\e}\right),
	\end{equation*}
for all $\varepsilon\in(0,\varepsilon_0)$.
Indeed, reasoning as we did for \eqref{eq:bound_on_L2_norm_Caputo_derivative} and \eqref{eq:T-plap}, but here applying \eqref{eq:lower-Q}-\eqref{eq:energyestimate-Q},
one can prove that there exists $\hat T>0$ such that
\begin{equation*}
	\int_0^{\hat{T}}\!\!\int_a^b\left[\prescript{C}{}\partial_t^{{(1+\alpha)}/{2}}u(x,t)\right]^2\,dxdt
	\leq \frac{C_1\e}{\sin(\alpha\pi/2)}\exp\left(-\frac{A}{\e}\right),
\end{equation*}
and, by using H\"older's inequality, we end up with
\begin{equation*}
	\hat T\geq \left[C_2
	\frac{ \alpha }{\e}\sin\left(\frac{\alpha\pi}{2}\right)
	\right]^{1/\alpha}
	\exp\left(\frac{A}{\alpha\e}\right)
	=\bar T_{\e,\alpha},
\end{equation*}
for some $C_2>0$, independent on $\alpha$ and $\e$. 

Therefore, we can conclude the proof exactly as done for Theorem \ref{thm:main}.
\end{proof}

Let us stress once again that the only modification with respect to the Allen--Cahn equation with Perona--Malik type diffusion is the fractional time scale, 
which introduces the factor $1/\alpha$ in the exponent.

The same approach applies to the degenerate case $\theta>2$. 
In the classical setting, the lower bound for the energy is algebraic rather than exponentially small, 
and the corresponding slow motion takes place on an algebraically long time scale. 
Using the fractional dissipation inequality \eqref{eq:energyestimate-Q} in place of the classical one, 
the same argument gives the fractional counterpart, with the exponent of the algebraic time scale divided by $\alpha$. 
This leads to the following result.

\begin{thm}[algebraic slow motion for Perona--Malik type diffusion]\label{thm:alg-Q}
Assume that $Q\in C^1(\R)$ satisfies \eqref{eq:Q-ass1}-\eqref{eq:Q-ass2} and that $F\in C^1(\R)$ satisfies \eqref{ipoF1}-\eqref{ipoF2} with $\theta>2$. 
Let $v$ be as in \eqref{vstruct} and let $\left\{k_j\right\}_{j\in\mathbb N}$ be as in \eqref{eq:exp_alg}. 
If $u^\e$ is a sufficiently regular solution of \eqref{eq:Q-dif}-\eqref{eq:Neu}-\eqref{eq:initial} with initial datum satisfying \eqref{eq:ass-u0} and
\begin{equation*}
	\mathscr{E}_{\e}[u^\varepsilon]\leq Nc_\e+C\e^{k_j},
\end{equation*}
for any $\varepsilon\ll1$, then,
\begin{equation*}
	\sup_{0\leq t\leq l{\e^{-k_j/\alpha}}}\|u^\varepsilon(\cdot,t)-v\|_{{}_{L^1}}\xrightarrow[\varepsilon\rightarrow0]{}0,
\end{equation*}
 for any $l>0$.
\end{thm}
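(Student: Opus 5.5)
The plan is to copy the scheme behind Theorems \ref{thm:main2} and \ref{thm:alg-Q}'s exponential analogue, Theorem \ref{thm:Q-main}. The only new input is the algebraic lower bound for the Perona--Malik energy $\mathscr E_\e$ in the degenerate case. It plays the role of Proposition \ref{prop:lower_deg}: if $\|u-v\|_{L^1}\le\delta_j$ and $\mathscr E_\e[u]\le Nc_\e+C\e^{k_j}$, then $\mathscr E_\e[u]\ge Nc_\e-C_j\e^{k_{j+1}}$. I would take it from the classical analysis in \cite[Proposition 3.5]{DLFS}, since it is purely variational and independent of $\alpha$. Monotonicity of the energy makes its hypothesis automatic along the flow. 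The fractional dissipation inequality \eqref{eq:energyestimate-Q} gives $\mathscr E_\e[u](t)\le\mathscr E_\e[u](0)\le Nc_\e+C\e^{k_j}$ for all $t$.

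\emph{Step 1: $L^2$ bound on the Caputo derivative.} Choose $\e_0$ so that $\|u_0^\e-v\|_{L^1}\le\delta_j/2$. Let $\hat T$ be such that the weighted integral in \eqref{claim1} is at most $\delta_j/2$. The representation $u(x,t)-u_0(x)=\Gamma(\tfrac{1+\alpha}{2})^{-1}\int_0^t\prescript{C}{}\partial_s^{(1+\alpha)/2}u\,(t-s)^{(\alpha-1)/2}ds$ then gives $\|u(\cdot,\hat T)-v\|_{L^1}\le\delta_j$. The lower bound therefore applies at $\hat T$, and \eqref{eq:energyestimate-Q} yields
\[
\int_0^{\hat T}\!\!\int_a^b\bigl[\prescript{C}{}\partial_t^{(1+\alpha)/2}u\bigr]^2dxdt\le\frac{C\e}{\sin(\alpha\pi/2)}\bigl(\e^{k_j}+\e^{k_{j+1}}\bigr)\le\frac{C}{\sin(\alpha\pi/2)}\e^{k_j+1}.
\]
Now let $T_*$ be the first time at which \eqref{claim1} becomes an equality. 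H\"older's inequality, as in Proposition \ref{prop:L2-norm}, gives $\delta_j/2\le C(\e^{k_j+1}T_*^\alpha/(\alpha\sin(\alpha\pi/2)))^{1/2}$. Hence $T_*\ge[C_j\alpha\sin(\alpha\pi/2)]^{1/\alpha}\e^{-(k_j+1)/\alpha}$, which is the analogue of Proposition \ref{prop:L2-norm_algebraic}. If no such $T_*$ exists, the bound holds for every time.

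\emph{Step 2: conclusion.} Fix $l>0$. For $\e$ small we have $l\e^{-k_j/\alpha}\le[C_j\alpha\sin(\alpha\pi/2)]^{1/\alpha}\e^{-(k_j+1)/\alpha}$, because the ratio of the two sides is a constant times $\e^{1/\alpha}$. So for $t\le l\e^{-k_j/\alpha}$ the representation formula and H\"older's inequality give
\[
\|u(\cdot,t)-u_0^\e\|_{L^1}\le C\bigl(\e^{k_j+1}\bigr)^{1/2}\bigl(l^\alpha\e^{-k_j}\bigr)^{1/2}=C_\alpha\sqrt{\e}\to0.
\]
Adding $\|u_0^\e-v\|_{L^1}\to0$ by the triangle inequality \eqref{trianglebar} finishes the proof.

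The main obstacle is Step 1's input, namely having the degenerate lower bound for $\mathscr E_\e$ with the right constant $c_\e$ and remainder $\e^{k_{j+1}}$. This is where the sign-changing flux $Q$ enters. I would rely on \cite{DLFS} and check that its constants depend only on $Q,F,v,j$ and not on $\alpha$. The rest is identical to the linear case, since \eqref{eq:energyestimate-Q} depends only on the gradient-flow structure $-\e\frac{d}{dt}\mathscr E_\e=\int(Q(\e^2u_x)_x-F'(u))u_t$, which follows from the Neumann conditions.
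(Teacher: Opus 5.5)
Your proposal is correct and follows the route the paper indicates. You combine the degenerate lower bound for $\mathscr E_\e$ from \cite[Proposition 3.5]{DLFS}, whose upper-bound hypothesis holds along the flow by monotonicity of the energy, with the fractional dissipation inequality \eqref{eq:energyestimate-Q}, obtain the analogue of Proposition \ref{prop:L2-norm_algebraic} through the H\"older argument, and conclude as in Theorem \ref{thm:main}. This is exactly the argument the paper sketches, and taking $T_*$ as the first time \eqref{claim1} becomes an equality is slightly more careful than the paper's wording, since the weighted integral in \eqref{claim1} need not be monotone in $\hat T$.
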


The results contained in this last section show that the effect of the Caputo derivative is essentially independent of the particular diffusion operator considered. 
Once the corresponding slow motion result is available for the classical problem, the fractional time derivative modifies the associated time scale by introducing the factor $1/\alpha$. 
In particular, exponential and algebraic slow motions in the classical setting remain exponential and algebraic, respectively, 
while their characteristic time scales are enlarged by the memory effect induced by the fractional derivative.

\section*{Acknowledgements}
The work of D. Cusseddu and R. Folino was partially supported by DGAPA-UNAM, program PAPIIT, grant IN-103425. L. F. L\'opez R\'ios was partially supported by DGAPA-UNAM, program PAPIIT, grant IN113225.

\end{document}